\renewcommand*\l@section{\@dottedtocline{1}{1.5em}{2.3em}}
\theoremstyle{plain}
\newtheorem{theorem}{Theorem}
\newtheorem{proposition}[theorem]{Proposition}
\newtheorem{lemma}[theorem]{Lemma}
\newtheorem{corollary}[theorem]{Corollary}
\theoremstyle{definition}
\newtheorem{example}{Example}
\theoremstyle{definition}
\newtheorem{definition}{Definition}
\newtheoremstyle{myrem}
 {3pt}
 {3pt}
 {\normalsize}
 { }
 {\itshape}
 {:}
 { }
 {}
 \theoremstyle{myrem}
 \appto\remark{\leftskip\parindent}
 \appto\remark{\rightskip\parindent}
\numberwithin{equation}{section}
\numberwithin{theorem}{section}
\begin{document}

~~~
\vspace{1cm}

\begin{center}
{\Large {\textbf {The  stability  of  persistent homology of  hypergraphs }}}
 \vspace{0.38cm}\\

{\large   Shiquan Ren,   Jie Wu*  }

\footnotetext[1] { *  corresponding  author.   }

\footnotetext[2]
{{\bf Financial Support. } Shiquan Ren was supported by  China Postdoctoral Science Foundation  (Grant no.  2020M680494, 2022M721023)   and Natural Science Foundation of China (NSFC  grant no. 12001310).    }

\footnotetext[3]
{{\bf Financial Support. }
Jie Wu was supported by Natural Science Foundation of China (NSFC grant no. 11971144),   High-level Scientific Research Foundation of Hebei Province,  and  the  start-up  research  fund  from  BIMSA.    }

\footnotetext[4]
{{\bf Acknowledgement. } The  authors  would like to express  their  deep
gratitude to the  editor  and the referee for  their  kind  helps.  }

\begin{quote}
\begin{abstract}
 Hypergraph  is  the most general  model   for complex networks involving group interactions.   Taking   the  ideas    of  path  homology     from  Alexander Grigor'yan,  Yong  Lin, Yuri Muranov  and  Shing-Tung Yau  \cite{lin1,lin2,lin3,lin4,lin6},   Stephane  Bressan, Jingyan   Li and the authors of this article   introduced   embedded homology  of hypergraphs~\cite{h1} in  2019,   which     has leaded to successful applications in protein-ligand binding network~\cite{liu-xia, lx1} in 2021.   A fundamental question  arising from practical applications  is about the stability of the persistent  embedded homology of hypergraphs.
   In this paper,  we prove  the  stability of the persistent  embedded homology as  well  as  the  persistent  homology of  the   associated simplicial complex     with respect to perturbations of the filtration  on  a  hypergraph.   We      apply  the  persistent homology  methods  to      morphisms  of  hypergraphs  and
   prove  the  stability  with respect  to  perturbations  of  the  filtrations.     We   prove  the  constancy   of  the persistent  Betti numbers   under some conditions on  the  simple-homotopy  types of  hypergraphs.
    \end{abstract}
\end{quote}

\end{center}

\noindent {{\bf 2010 Mathematics Subject Classification.}  	Primary   55U10, 	55U15;  Secondary  55N99,  54D99
}

\noindent
{{\bf Key  words and Phrases.}   hypergraphs,  simplicial complexes,   persistent homology, stability  }

\section{Introduction}

 Let $V$ be a finite set equipped with a total order.
  Let $2^V$ denote the power-set of $V$.
  Let $\emptyset$ denote the empty set.  A {\it hypergraph}    $\mathcal{H}$ on $V$ is a subset of  $2^V\setminus\{\emptyset\}$ (cf. \cite{berge, parks}).  We call $V$ the {\it vertex-set} and call an element of $V$  a {\it vertex}.   We call an element in $\mathcal{H}$  a  {\it  hyperedge}. From the view of algebraic topology, a hypergraph may be considered as an \textit{incomplete} simplicial complex with missing some faces. More precisely,
  if   a  hypergraph satisfies  the   condition:
  {\it
 for any $\sigma\in \mathcal{H}$ and any  nonempty  subset $\tau\subseteq \sigma$,    the set $\tau$ must be a hyperedge in  $\mathcal{H}$},
 then  it  is an (abstract)  simplicial complex.
 A hyperedge  of a simplicial complex is   a   simplex.   Let $\mathcal{H}$  be a hypergraph on $V$ and let $\mathcal{H}'$  be a hypergraph on  $V'$.
A  morphism  $\varphi: \mathcal{H}\longrightarrow \mathcal{H}'$   is
 a  map  $\varphi:  V\longrightarrow V'$  such that  each  hyperedge of $\mathcal{H}$  is  sent to a hyperedge of $\mathcal{H}'$.   A   morphism of hypergraphs between two  simplicial complexes  is     a     simplicial map.

 People have been  used  networks having group interactions    in  various  disciplines.   Pairwise representations   are not necessarily appropriate when the fundamental interactions involve more than two entities at the same time, such as bionetworks on protein-protein interactions, protein-ligand interactions and microbiome interactions as well as many social networks.   Hypergraph  is  the most general mathematical model without constraint conditions for complex networks involving group interactions, which has been extensively used by the recent researchers  \cite{interaction3, interaction4,  interaction5}, see the review articles \cite{physr,rev111,review}. This demands mathematical explorations on hypergraphs for introducing (more) mathematical features that can detect the structures of hypergraphs.

Since 2010s,   Alexander Grigor'yan,  Yong  Lin, Yuri Muranov  and  Shing-Tung Yau  started to  study    the    topological features   of   graphs  and  digraphs, with introducing path homology theory for graphs and digraphs in their series work~\cite{lin1,lin2,lin3,lin4,lin6}, which breaks through some frames of the classical algebraic topology in the sense that their homology theory is able to apply to \textit{incomplete complexes with missing faces}. Taking the ideas of path homology, Stephane  Bressan, Jingyan   Li and the authors of this article introduced \textit{embedded homology} of hypergraphs~\cite{h1} in 2019, which seems one of the most canonical extensions of simplicial homology to hypergraphs in the sense that the representation of a hypergraph in a chain complex induces infimum complex and supremum complex having the same chain homotopy type that is used for defining the embedded homology, see Subsection~\ref{hypergraph-homology} for details.

The newly introduced embedded homology of hypergraphs has leaded to successful applications in protein-ligand binding network~\cite{liu-xia, lx1} in 2021, where the persistent method for   hypergraphs plays an important role for the  experimental results. A fundamental question arising from practical applications is then whether the barcodes derived from persistent embedded homology is robust. In terms of mathematics, the question is about the stability of the persistent embedded homology of hypergraphs. The purpose of this article is to give the stability theorem on persistent embedded homology of hypergraphs.

Our methodology for studying the stability  of  the persistent embedded homology  for hypergraphs is similar to that on the stability of persistent simplicial homology  \cite{pmd,pd1},  while some technical difficulties have to be overcome in the case of hypergraphs. Since the embedded homology of hypergraphs is a canonical extension of simplicial homology, our stability theorem on  persistent embedded homology of hypergraphs  can be viewed as a generalization of the stability theorem on persistent simplicial homology that gives a robust topological approach to data modeled by hypergraph.

\smallskip

Now let us give the detailed discussions. We consider real-valued functions on hypergraphs and use the persistent of the embedded homology groups  as  well as  the  homology  of  the associated simplicial complexes  to study their qualitative and quantitative behavior.  Specifically,  we study the  topological features of hypergraphs as well as morphisms  between hypergraphs.  By applying the persistent  of  the embedded  homology as  well as  the  homology  of  the associated simplicial complexes  to the induced pull-back filtrations as well as the induced push-forward filtrations, we use persistent diagrams to encode the topological features of the morphisms and prove the stability of this encoding.
The stability that will be  proved in this paper    gives the theoretical foundation    for the applications of the persistent   embedded homology    for hypergraphs,  which says that the out-put of the persistent homology is stable under small perturbations   of the observation quantity.

The first main result proves that  the persistent diagram   of the persistent homology for a hypergraph  is  stable   with respect to  perturbations of a filtration of the hypergraph.
Let  $\mathcal{H}$  be  a  hypergraph. Let  $f$  be a  real function on $\mathcal{H}$.   Let  $\mathcal{H}^f$  be  the filtration
  of  $\mathcal{H}$  with respect to  $f$,  i.e. $\mathcal{H}^f$  is  a family of  hypergraphs $\mathcal{H}_t^f$,  $t\in\mathcal{R}$,  where $\mathcal{H}^f_t=f^{-1}(-\infty,t]$.
  We  have  the persistent homology $H_n(\Delta(\mathcal{H}^f_t))$   of the filtered associated simplicial complex  $\Delta(\mathcal{H}^f_t)$  whose  persistent  diagram is denoted  by  $D(\Delta\mathcal{H}, f)$,  the persistent homology $H_n(\delta(\mathcal{H}^f_t))$  of the lower-associated  simplicial complex $\delta(\mathcal{H}^f_t)$  whose  persistent  diagram is denoted  by  $D(\delta\mathcal{H}, f)$,      and  the  persistent of the embedded homology $H_n(\mathcal{H}^f_t)$  whose  persistent  diagram is denoted  by  $D(\mathcal{H}, f)$.
   Let  $ D\mathcal{H}^f$  be  the
  triple $(D(\Delta\mathcal{H}, f), D(\delta\mathcal{H}, f),  D(\mathcal{H}, f))$.
   Then the  stability  of the  persistent diagram  makes  sense  for   $ D\mathcal{H}^f$.

  \begin{theorem}[Main Result I]
 \label{th-main111}
 Let $\mathcal{H}$ be a hypergraph. Let $f$ and $g$  be real-valued functions on $\mathcal{H}$. Then
the bottleneck distance $d_B^\infty(D\mathcal{H}^f, D\mathcal{H}^g)$ between the persistent diagrams induced from $\mathcal{H}^f$ and $\mathcal{H}^g$ is bounded above  by  the $L^\infty$-distance $||f-g||_{\infty}$ between $f$ and $g$.
 \end{theorem}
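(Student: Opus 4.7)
The plan is to observe that Theorem~\ref{th-main111} is essentially a packaging of Lemma~\ref{le-888.1} together with the general stability result for strongly interleaved persistent vector spaces, which has already been stated as equation (\ref{eq-888888}). So my proof will consist of applying these tools coordinate-wise (one coordinate for each of the three persistent homology theories entering the definition (\ref{bottleneck})) and then taking the maximum.

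First I would set $\epsilon := \|f-g\|_{\infty}$, noting we may assume $\epsilon < +\infty$ since $\mathcal{H}$ has only finitely many hyperedges (otherwise the bound is vacuous). Then (\ref{eq-pd-1}) holds with equality, and Lemma~\ref{le-888.1} supplies three separate strong $\epsilon$-interleavings, one for each of the persistent vector spaces over $\mathbb{F}$ appearing in the definition of $d_B^\infty(D\mathcal{H}^f, D\mathcal{H}^g)$: the persistent embedded homology $\{H_n(\mathcal{H}_t^f)\}$ vs.\ $\{H_n(\mathcal{H}_t^g)\}$, the persistent homology of the associated simplicial complexes, and the persistent homology of the lower-associated simplicial complexes.

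Second, I would apply the general stability bound (\ref{eq-888888}), which is quoted from \cite[Theorem~4.4]{pd2}, separately to each of the three pairs. This yields three inequalities of the form
\begin{eqnarray*}
d_B^\infty\bigl(D(\mathcal{H},f), D(\mathcal{H},g)\bigr) &\leq& \epsilon,\\
d_B^\infty\bigl(D(\Delta\mathcal{H},f), D(\Delta\mathcal{H},g)\bigr) &\leq& \epsilon,\\
d_B^\infty\bigl(D(\delta\mathcal{H},f), D(\delta\mathcal{H},g)\bigr) &\leq& \epsilon.
\end{eqnarray*}
Taking the maximum over these three, and invoking the definition (\ref{bottleneck}) of $d_B^\infty(D\mathcal{H}^f, D\mathcal{H}^g)$ as precisely that maximum, gives
\begin{eqnarray*}
d_B^\infty(D\mathcal{H}^f, D\mathcal{H}^g) \leq \epsilon = \|f-g\|_{\infty},
\end{eqnarray*}
which is the desired conclusion; this is also exactly the content of (\ref{eq-ph-33}) reassembled as a theorem statement.

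Since the inequality is obtained componentwise and then aggregated by a maximum, I do not expect any real obstacle in the proof itself: the substantive work has already been done in Lemma~\ref{le-888.1} (constructing the interleaving maps from the sublevel-set inclusions $\mathcal{H}_t^f \subseteq \mathcal{H}_{t+\epsilon}^g \subseteq \mathcal{H}_{t+2\epsilon}^f$, which respect the embedded homology construction as well as $\Delta$ and $\delta$) and in the stability theorem of \cite{pd2}. The only point that warrants brief justification in the writeup is that the three persistent vector spaces really do satisfy the $q$-tameness (or finiteness) hypothesis needed to invoke \cite[Theorem~4.4]{pd2}, which is automatic here because $\mathcal{H}$ is finite and hence each $\mathbb{F}$-vector space in each filtration is finite-dimensional.
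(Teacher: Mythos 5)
Your proposal is correct and follows essentially the same route as the paper: setting $\epsilon=\|f-g\|_\infty$, invoking Lemma~\ref{le-888.1} for the three strong $\epsilon$-interleavings, applying the stability bound (\ref{eq-888888}) from \cite[Theorem~4.4]{pd2} to each, and taking the maximum per the definition (\ref{bottleneck}), exactly as in the derivation of (\ref{eq-ph-33}). Your added remark on the finiteness needed to apply \cite[Theorem~4.4]{pd2} is a reasonable (and slightly more careful) touch, but otherwise the arguments coincide.
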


   The second main result proves  that   the persistent diagram of a morphism between two hypergraphs  is  stable    with respect to perturbations of the  filtrations  on both the source  hypergraph and the target   hypergraph.
     Let  $\varphi:  \mathcal{H}\longrightarrow \mathcal{H}'$  be a  morphism  of hypergraphs.     Let  $f$   be  a  real  function  on  $\mathcal{H}$  and  let  $f'$  be  a  real  function  on  $\mathcal{H}'$.   Then  we  have a  filtration  $\mathcal{H}_t^f$  of  $\mathcal{H}$  and  a  filtration  ${\mathcal{H}'}_t^{f'}$    of    $\mathcal{H}'$.
 We  will define   in  Subsection~\ref{s666} an  induced filtration of   $\mathcal{H}$  by pulling  back  ${\mathcal{H}'}_t^{f'}$    via  $\varphi$
    and  define  twenty-one  induced homomorphisms of  persistent homology groups,   denoted as  $\Phi _{f }$,     in the  commutative  diagram  in   Subsection~\ref{s666}~(A).
      We  will  also   define   in  Subsection~\ref{s666} an  induced filtration of $\mathcal{H}'$  by pushing  forward   ${\mathcal{H}}_t^{f}$    via  $\varphi$ and  define   twenty-one  induced  homomorphisms  of   persistent  homology  groups,  denoted as  $\Phi'_{f'}$,   in the  commutative  diagram  in   Subsection~\ref{s666}~(B).   We  define the  persistent diagram of  a  homomorphism between two persistent homology groups to be the triple consisting  of  the  persistent diagram of  the kernel,   the  persistent diagram of   the  image, and   the  persistent diagram of     the  cokernel.
  Then the  stability  of the  persistent diagram  makes  sense  for   $\Phi_{f}$ and  $\Phi'_{f'}$.

 \begin{theorem}[Main Result II]\label{main2}
Let $\epsilon>0$.  Suppose the  persistent  homology is with field coefficients.  \begin{enumerate}[(i).]
\item
If $||f-g||_\infty\leq \epsilon$,  then each of the  persistent  linear map among the twenty-one   persistent  linear maps in the commutative diagram in (B) (cf. Subsection~\ref{s666}), denoted as $\Phi_f$ and $\Phi_g$, satisfies
  $d_B^\infty(D(\Phi_f), D(\Phi_g))\leq \epsilon$;

\item
If $||f'-g'||_\infty\leq \epsilon$,   then each of the  persistent  linear map among the twenty-one    persistent  linear maps in the commutative diagram in (A) (cf. Subsection~\ref{s666}), denoted as $\Phi'_{f'}$ and $\Phi'_{g'}$, satisfies
  $ d_B^\infty(D(\Phi'_{f'}), D(\Phi'_{g'}))\leq \epsilon$.
\end{enumerate}
 \end{theorem}

\smallskip

Thanks  to the  stability theorem of the persistent homology of simplicial complexes,    Theorem~\ref{th-main111}  and   Theorem~\ref{main2} can  be  proved.
The notion of persistent diagram was  introduced in \cite{2002}.
  The stability
  of persistent diagrams   was  proved in  \cite{pd1}.   An interleaving condition was  discovered in  \cite{pd2},  which provides a way to measure the distance between persistent modules and ensure the stability.      Homomorphisms of persistent modules   and the stability  was  studied in    \cite{add-1}.    The   methodology   for   the proofs of Theorem~\ref{th-main111}  and   Theorem~\ref{main2}  is  originated  from    \cite{pd2,pmd,pd1,add-1,decomp,2010ams,2002} with some technical details overcome.

   For   any  real  numbers  $t\leq  s$,   the  persistent  Betti number   $\beta_n^{t,s}$  is  the  dimension  of  the  persistent  homology  $H_n^{t,s}$  (cf.  \cite{2005}).   We  define  the  simple-homotopy types  of  hypergraphs     in  Subsection~\ref{ssssssss3.11111111}.  For  any  hypergraph  $\mathcal{H}$  and  any  real  function   $f$  on  $\mathcal{H}$,   the  third main result  proves  the  constancy   of  the   persistent  Betti  numbers  under  some   conditions on the   simple-homotopy  types  of  the  filtered   hypergraphs.

  \begin{theorem}[Main Result  III]
    Let  $\mathcal{H}$  be  a  hypergraph  and  let  $f:  \mathcal{H}\longrightarrow \mathbb{R}$.         For  any   $t\leq  s$,
    \begin{enumerate}[(1).]
    \item
if    $\mathcal{H}_t^f$   and   $\mathcal{H}_s^f$
 have all the vertices as $0$-hyperedges and  have the same   strong  simple-homotopy type,    then the  persistent  Betti  number
 $\beta_n^{t,s}$    of  the embedded homology     is  constant;
  \item
  if  $\mathcal{H}_t^f$   and   $\mathcal{H}_s^f$    have the same  weak simple-homotopy type,      then
  the  persistent  Betti  number
  $\beta_{\Delta,n}^{t,s}  $  of  the  homology of  the  associated  simplicial complexes  is  constant;
    \item
    if   $\mathcal{H}_t^f$   and   $\mathcal{H}_s^f$ have the same  weak simple-homotopy type via a sequence of weak simplicial collapses and  weak simplicial expansions   such  that   in each  weak elementary  simplicial collapse or weak elementary  simplicial expansion,  $\eta$  is a free face of $\xi$ and one of (a), (b)  and (c) in Lemma~\ref{pr-2.8a182}  is satisfied,       then  the  persistent  Betti  number
  $\beta_{\delta,n}^{t,s}  $   of  the  homology of  the  lower-associated  simplicial complexes  is  constant.
    \end{enumerate}
    \end{theorem}

\section{The  homology of   hypergraphs}\label{s222}

  \subsection{Hypergraphs and  simplicial  complexes}

Let $V$  be a finite set with a total order $\prec$.
A  {\it hyperedge}    on $V$  is
  $\sigma=\{v_0,v_1,\ldots,v_n\}$
 where $v_0,v_1,\ldots, v_n\in V$  and  $v_0\prec v_1\prec\cdots \prec v_n$.  We  call $n$  the  {\it dimension} of $\sigma$ and call   $\sigma$   an {\it $n$-hyperedge}.   A {\it hypergraph} $\mathcal{H}$  on $V$   is a collection of hyperedges on $V$.
An {\it (abstract)  simplicial complex} $\mathcal{K}$  on $V$  is a hypergraph  on $V$  such that   $\tau\in\mathcal{K}$   for any $\sigma\in \mathcal{K}$  and any  nonempty  subset $\tau\subseteq \sigma$.    A  hyperedge of a simplicial complex  is  called a {\it simplex}.

 For a  hyperedge $\sigma=\{v_0,v_1,\ldots,v_n\}$ on $V$, the {\it associated simplicial complex} $\Delta\sigma$ of $\sigma$ is   the collection of all the nonempty subsets of $\sigma$
\begin{eqnarray*}\label{eq-md1}
\Delta\sigma=\{\{v_{i_0},v_{i_1},\ldots,v_{i_k}\}\mid 0\leq i_0<i_1<\cdots< i_k\leq n,  0\leq k\leq n\}.
\end{eqnarray*}
Let $\sigma=\{v_0,v_1,\ldots,v_n\}$   and $\tau=\{u_0,u_1,\ldots,u_m\}$  be two   hyperedges  on $V$.   Suppose $\sigma\cap\tau=\emptyset    $.  The {\it join}  $\sigma*\tau$   is  an $(n+m+1)$-hyperedge
\begin{eqnarray*}
\sigma*\tau =\{v_0,v_1,\ldots,v_n,u_0,u_1,\ldots,u_m\},
\end{eqnarray*}
where the right-hand side is up  to a rearrangement of the vertices $v_0$, $v_1$, $\ldots$, $v_n$, $u_0$, $u_1$, $\ldots$, $u_m$ with respect to the total order $\prec$.     It  is  direct  that
$\Delta(\sigma  *\tau)=(\Delta  \sigma)  *  (\Delta  \tau)$.
Let  $\mathcal{H}$  be  a hypergraph on $V$.     The  {\it associated simplicial complex}
    \begin{eqnarray*}\label{e10}
\Delta \mathcal{H}=\{\tau\in\Delta\sigma\mid \sigma\in \mathcal{H}
\}
\end{eqnarray*}
  is the smallest simplicial complex that $\mathcal{H}$ can be embedded in (cf. \cite{parks})  and     the {\it lower-associated simplicial complex}
  \begin{eqnarray*}\label{e11}
\delta\mathcal{H} = \{\sigma\in\mathcal{H}\mid \Delta\sigma\subseteq \mathcal{H}\}
  = \{\tau\in\Delta\sigma\mid \Delta\sigma\subseteq \mathcal{H}\}
\end{eqnarray*}
is the largest simplicial complex that can be embedded in $\mathcal{H}$.

Let $V$ and $V'$ be two totally-ordered finite sets.  Let $\mathcal{H}$ and $\mathcal{H}'$ be hypergraphs on $V$ and $V'$ respectively.  A {\it morphism} of hypergraphs from $\mathcal{H}$ to $\mathcal{H}'$ is  a map $\varphi$ from $V$ to $V'$
such that for any $k\geq 0$,  whenever $\sigma=\{v_0,\ldots,v_k\}$ is a hyperedge of $\mathcal{H}$,   its  image  $\varphi(\sigma) = \{\varphi(v_0), \ldots, \varphi(v_k)\}$ is a hyperedge of $\mathcal{H}'$.  Here $v_0,\ldots, v_k$ are distinct in $V$  while  $\varphi(v_0),\ldots,\varphi(v_k)$ may not be distinct in $V'$.  Such  a  morphism $\varphi:  \mathcal{H}\longrightarrow \mathcal{H}'$  induces  simplicial maps
$\delta\varphi:  \delta \mathcal{H}\longrightarrow \delta\mathcal{H}'$    and
$\Delta\varphi:  \Delta \mathcal{H}\longrightarrow \Delta\mathcal{H}'$
such that $\varphi=(\Delta\varphi)\mid_{\mathcal{H}}$ and $\delta\varphi=\varphi\mid_{\delta\mathcal{H}}$.
Suppose  in addition  that  $V\cap V'=\emptyset    $.   Let  $V\sqcup  V'$  be  the  disjoint  union.
  We define the {\it join}  $\mathcal{H}*\mathcal{H}'$    to  be  a   hypergraph  on    $V\sqcup  V'$    by
\begin{eqnarray*}
 \mathcal{H}*\mathcal{H}' =\{\sigma*\sigma'\mid \sigma\in\mathcal{H}{\rm ~and~}\sigma'\in\mathcal{H}'\}  \cup\mathcal{H}\cup\mathcal{H}'.
\end{eqnarray*}

\subsection{The  homology of  hypergraphs}\label{hypergraph-homology}

  For each $n\geq 0$,  let $(\Delta\mathcal{H})_n$  be the set of all the $n$-simplices in $\Delta\mathcal{H}$.  For  each $0\leq i\leq n$,
consider the face map
 $d_i:  (\Delta\mathcal{H})_n\longrightarrow (\Delta\mathcal{H})_{n-1}$
given by $d_i\{v_0,v_1\ldots,v_n\}= \{v_0,\ldots,    \widehat{v_i},\ldots, v_n\}$.
Let $R$ be a commutative ring with unit.  Let $C_n(\Delta\mathcal{H};R)$  be the  free $R$-module  generated by  all   the  elements  in  $(\Delta\mathcal{H})_n$.   Let  $\partial_n=\sum_{i=0}^n (-1)^i d_i$,  which  extends  linearly  over  $R$.    We  have a chain complex
\begin{eqnarray*}
\cdots   \overset{\partial_{n+1}}{ \longrightarrow}   C_{n} (\Delta\mathcal{H};R)\overset {\partial_{n }}{ \longrightarrow}  C_{n-1} (\Delta\mathcal{H};R)\overset{\partial_{n-1}}{ \longrightarrow} \cdots\overset {\partial_{1}}{ \longrightarrow}   C_{0} (\Delta\mathcal{H};R)\overset {\partial_{0}}{ \longrightarrow}  0,
\end{eqnarray*}
denoted  as $C_*(\Delta\mathcal{H};R)$.
 Let $R(\mathcal{H})_n$   be the collection of all the formal linear combinations of the $n$-hyperedges in $\mathcal{H}$ with coefficients in $R$.   Then $R(\mathcal{H})_*$ is a  graded sub-$R$-module  of $C_*(\Delta\mathcal{H}; R)$.  By \cite[Section~2]{h1},
the infimum chain complex
\begin{eqnarray*}
 \text{Inf}_n(R(\mathcal{H})_*)= R(\mathcal{H})_n\cap\partial_n^{-1}(R(\mathcal{H})_{n-1}),  \text{\ \ \ } n\geq 0
\end{eqnarray*}
 is    the largest sub-chain complex of $C_*(\Delta\mathcal{H}; R)$      contained in $R(\mathcal{H})_*$
  and
the supremum chain complex
 \begin{eqnarray*}
\text{Sup}_n(R(\mathcal{H})_*)= R(\mathcal{H})_n+\partial_{n+1}(R(\mathcal{H})_{n+1}), \text{\ \ \ }n\geq 0
\end{eqnarray*}
 is   the smallest sub-chain complex of $C_*(\Delta\mathcal{H}; R)$   containing   $R(\mathcal{H})_*$.
 The canonical inclusion
  $\iota: \text{Inf}_n(R(\mathcal{H})_*)\longrightarrow \text{Sup}_n(R(\mathcal{H})_*)  $
induces an isomorphism
\begin{eqnarray}\label{eq-0.1}
\iota_*:   H_n( \text{Inf}(R(\mathcal{H})_*))
 \overset{\cong}{\longrightarrow}H_n(\text{Sup}(R(\mathcal{H})_*)).
\end{eqnarray}
This  homology group  is  the  $n$-th  {\it embedded homology} of
  $\mathcal{H}$  and is  denoted  as $H_n(\mathcal{H};R)$ (cf. \cite[Subsection~3.2]{h1}).

Let $\varphi:\mathcal{H}\longrightarrow\mathcal{H}'$ be   a morphism  of hypergraphs.
We  have  a  commutative  diagram  of chain  complexes
 {\small\begin{eqnarray*}
 \xymatrix{
{\rm  Ker}((\Delta \varphi)_\#) \ar[r] & C_*(\Delta\mathcal{H}; R) \ar[r]^{(\Delta \varphi)_\#} &   C_*(\Delta\mathcal{H}'; R) \ar[r] &{\rm  Coker}((\Delta \varphi)_\#) \\
{\rm  Ker}(\text{Sup} (\varphi)) \ar[r]\ar[u]  &\text{Sup}_*(\mathcal{H}) \ar[u]\ar[r]^{\text{Sup} (\varphi)}& \text{Sup}_*(\mathcal{H}') \ar[u]   \ar[r] &{\rm  Coker}(\text{Sup} (\varphi))\ar[u]\\
{\rm  Ker}(\text{Inf} (\varphi)) \ar[r]\ar[u]& \text{Inf}_*(\mathcal{H}) \ar[u]\ar[r]^{\text{Inf} (\varphi)}& \text{Inf}_*(\mathcal{H}') \ar[u] \ar[r] &{\rm  Coker}(\text{Inf} (\varphi))\ar[u]\\
{\rm  Ker}((\delta \varphi)_\#) \ar[r]\ar[u]& C_*(\delta\mathcal{H}; R) \ar[u]\ar[r]^{(\delta \varphi)_\#} &   C_*(\delta\mathcal{H}'; R)
 \ar[u] \ar[r] &{\rm  Coker}((\delta \varphi)_\#)\ar[u],
 }
 \end{eqnarray*}}where  the  vertical arrows  in the first  three  columns  are  inclusions  and  the  vertical arrows  in the last column  are  homomorphisms of  chain complexes.
This   induces   a  commutative diagram of  homology groups
{\small\begin{eqnarray*}
 \xymatrix{
H_*(\text{Ker}((\Delta \varphi)_\#))\ar[r] & H_* (\Delta \mathcal{H}; R) \ar[r]^{(\Delta \varphi)_*} &   H_*(\Delta\mathcal{H}'; R) \ar[r] & H_*(\text{Coker}((\Delta  \varphi )_\#))\\
H_*(\text{Ker}(\text{Sup} (\varphi)))\ar[rd]\ar[u]& &
& H_*(\text{Coker}(\text{Sup} (\varphi)))\ar[u]\\
&H_*(\mathcal{H};R) \ar[uu]\ar[r]^{\varphi_*}& H_*(\mathcal{H}';R) \ar[uu]\ar[ru]\ar[rd]&\\
H_*(\text{Ker}(\text{Inf} (\varphi)))\ar[ru]\ar[uu]_{} &&
& H_*(\text{Coker}(\text{Inf} (\varphi)))\ar[uu]_{}\\
H_*(\text{Ker}((\delta \varphi)_\#))\ar[r]\ar[u]&  H_*(\delta \mathcal{H}; R) \ar[uu]\ar[r]^{(\delta  \varphi)_*} &   H_*(\delta\mathcal{H}'; R)\ar[uu] \ar[r]& H_*(\text{Coker}((\delta \varphi)_\#)).
 \ar[u]
 }
 \end{eqnarray*}}The  next  proposition follows from  \cite[Theorem~3.10]{h1}.

\begin{proposition}[Mayer-Vietoris sequences]
\label{pr-3.8897}
Let $\mathcal{H}$ and $\mathcal{H}'$ be two hypergraphs  satisfying   the  hypothesis
\begin{quote}
{\sc (P)}.  for any $\sigma\in\mathcal{H}$ and any  $\sigma'\in\mathcal{H}'$,  either  $\sigma\cap\sigma'$ is the  emptyset   or   $\sigma\cap\sigma'\in \mathcal{H}\cap\mathcal{H}'$.
\end{quote}
  Then  we  have a commutative diagram
{\small\begin{eqnarray*}
\xymatrix{
 \cdots\ar[r]
 & H_n(\delta\mathcal{H}\cap\delta\mathcal{H}')\ar[r]\ar[d]
 & H_n(\delta\mathcal{H})\oplus  H_n(\delta\mathcal{H}') \ar[r]\ar[d]
 &
&
& \\
 \cdots\ar[r]
 & H_n(\mathcal{H}\cap\mathcal{H}')\ar[r]\ar[d]
 & H_n(\mathcal{H})\oplus  H_n(\mathcal{H}') \ar[r]\ar[d]
 &
&
&\\
\cdots\ar[r]
 & H_n(\Delta\mathcal{H} \cap\Delta\mathcal{H}' )\ar[r]
 & H_n(\Delta\mathcal{H} )\oplus  H_n(\Delta\mathcal{H}' ) \ar[r]
 &
&
&
}\\
~~~\\
\xymatrix{
  &
 & \ar[r]
 &  H_n(\delta\mathcal{H}\cup\delta\mathcal{H}') \ar[r]\ar[d]
&   H_{n-1}(\delta\mathcal{H}\cap\delta\mathcal{H}')\ar[r]\ar[d]
& \cdots\\
 &
 & \ar[r]
 &  H_n(\mathcal{H}\cup\mathcal{H}') \ar[r]\ar[d]
&   H_{n-1}(\mathcal{H}\cap\mathcal{H}')\ar[r]\ar[d]
& \cdots\\
 &
 &  \ar[r]
 &  H_n(\Delta\mathcal{H} \cup\Delta\mathcal{H}' ) \ar[r]
&   H_{n-1}(\Delta\mathcal{H} \cap\Delta\mathcal{H}'  )\ar[r]
& \cdots
}
\end{eqnarray*}}such that each row is a long exact sequence    and each vertical map is a homomorphism    induced by the canonical inclusions  of  hypergraphs.
\end{proposition}

 The  next  proposition follows from  \cite[Theorem~1.1]{wrl}.

\begin{proposition}[K\"unneth-type formulae]
\label{pr-3.3.996}
Let $R$  be a principal ideal domain  with  unit $1$.
 We  have  a  commutative diagram
{\small\begin{eqnarray*}
\xymatrix{
  \bigoplus_{p+q+1=n} H_{p+1}(\delta\mathcal{H})\otimes H_{q+1}(\delta\mathcal{H}')\ar[r]\ar[d]
&H_{n+1}(\delta\mathcal{H} *\delta\mathcal{H}') \ar[r]\ar[d]
& &  \\
  \bigoplus_{p+q+1=n} H_{p+1}(\mathcal{H})\otimes H_{q+1}(\mathcal{H}')\ar[r]\ar[d]
&H_{n+1}(\mathcal{H} *\mathcal{H}')\ar[r]\ar[d]
 &  & \\
  \bigoplus_{p+q+1=n} H_{p+1}(\Delta\mathcal{H})\otimes H_{q+1}(\Delta\mathcal{H}')\ar[r]
&H_{n+1}(\Delta\mathcal{H} *\Delta\mathcal{H}')\ar[r]
 &  &
}\\
~~~\\
\xymatrix{
 &
&  \ar[r]
&\bigoplus_{p+q+1=n} {\rm  Tor}_R(H_{p+1}(\delta\mathcal{H}), H_{q}(\delta\mathcal{H}')) \ar[d]   \\
 &
& \ar[r]
 &\bigoplus_{p+q+1=n} {\rm  Tor}_R(H_{p+1}(\mathcal{H}), H_{q}(\mathcal{H}')) \ar[d]  \\
 &
 & \ar[r]
 &\bigoplus_{t+s+1=n} {\rm  Tor}_R(H_{p+1}(\Delta\mathcal{H}), H_{q}(\Delta\mathcal{H}'))
}
\end{eqnarray*}}such that each row  is  a  short  exact sequence  and  each  vertical  map  is  a  homomorphism        induced  by  the  canonical  inclusions  of  hypergraphs.
\end{proposition}

 \section{Simple-homotopy types  of hypergraphs  and  homology}\label{sect4}

\subsection{Collapses  of  hypergraphs}\label{ssssssss3.11111111}

Let $\mathcal{H}$  be a hypergraph.
 Let $\eta,\xi\in \mathcal{H}$.
  If $\eta \subsetneq  \xi$ and $\eta$ is not a proper face of any other hyperedges  in  $\mathcal{H}$,   then we say that $\eta$ is a {\it free face} of $\xi$  in $\mathcal{H}$.
Let  $\mathcal{A}\subseteq \mathcal{H}$  be  a  sub-hypergraph  of  $\mathcal{H}$.   We  call  $(\mathcal{H},\mathcal{A})$  a  {\it   pair}  of  hypergraphs.

\begin{definition}\label{def-88987}
\begin{enumerate}[(1).]
\item
 We say that
  there exists a   {\it weak elementary simplicial collapse} from $\mathcal{H}$ to $\mathcal{A}$  if
there exist  $\eta,\xi\in\mathcal{H}$  such that   $\eta$  is a free face of  $\xi$  in $\mathcal{H}$  and
$\mathcal{A}=\mathcal{H}\setminus\{\eta,\xi\}$  is  not   the  emptyset;

\item
  We say that
  there exists a   {\it strong elementary    simplicial collapse} from $\mathcal{H}$ to $\mathcal{A}$  if there exist  a
   weak
  elementary simplicial collapse  from $\mathcal{H}$ to $\mathcal{A}$
   such  that  (i).
$\mathcal{H}=\mathcal{A}\cup \Delta (\{v\}* \eta)$   and
(ii).
$\mathcal{A}\cap\Delta(\{v\}*\eta)=\{v\}*{\rm bd}(\eta)$    for  some $v\in V$ and  some $\eta\in\mathcal{H}$.
 Equivalently,   there exists a  strong elementary    simplicial collapse  from $\mathcal{H}$ to $\mathcal{A}$   if   both  (i)  and  (ii)     are  satisfied  such that
$\eta$  is  a free face  of  $\{v\} *\eta$   in   $\mathcal{H}$.
  \end{enumerate}
\end{definition}

\begin{definition}
We say that $\mathcal{H}$ {\it (weakly, strongly) collapses simplicially} to  $\mathcal{A}$   or $\mathcal{A}$  {\it (weakly, strongly) expands simplicially} to $\mathcal{H}$    if    there is a finite sequence of  (weak, strong)  elementary simplicial collapses $\mathcal{H}=\mathcal{H}_0\to \mathcal{H}_1\to \cdots\to \mathcal{H}_q = \mathcal{A}$.
\end{definition}

\begin{definition}
Let $\mathcal{H}$ and $\mathcal{H}'$  be two hypergraphs.
 We say that  $\mathcal{H}$  and $\mathcal{H}'$  have the same {\it  (weak, strong) simple-homotopy type}
  if  there is a finite sequence $\mathcal{H}=\mathcal{H}_0 \to \mathcal{H}_1\to\cdots\to \mathcal{H}_q=\mathcal{H}'$,   where each arrow represents a (weak, strong) simplicial expansion or a  (weak, strong)  simplicial collapse.
\end{definition}

\begin{example}
Let $\mathcal{H}=\{ \{v_0,v_1\}, \{v_0,v_1,v_2\},\{v_0,v_1,v_3\}, \{v_0,v_1,v_2,v_4,v_5\}\}$.
Then
   (i).  $\{v_0,v_1\}$  is not a free face of $\{v_0,v_1,v_2\}$   in $\mathcal{H}$,
     (ii).   $\{v_0,v_1,v_2\}$  is a   free face of  $\{v_0,v_1,v_2,v_4,v_5\}$  in $\mathcal{H}$,
    (iii).  $\{v_0,v_1\}$  is  not  a free face of  $\{v_0,v_1,v_2,v_4,v_5\}$  in $\mathcal{H}$.
\end{example}

\begin{example}
Let $(\mathcal{H},\mathcal{A})$ by
$
\mathcal{H}=\{\{v_0,v_1\},\{v_2,v_3\},\{v_0,v_1,v_2,v_3\}\}$   and   $\mathcal{A}=\{\{v_2,v_3\}\}$.
  Let $\eta=\{v_0,v_1\}$  and  $\xi=\{v_0,v_1,v_2,v_3\}$.   Then   $\eta$  is a free  face of   $\xi$  in $\mathcal{H}$  and   $\mathcal{A}=\mathcal{H}\setminus\{\eta,\xi\}$.  Thus  there is a weak elementary simplicial collapse from $\mathcal{H}$ to $\mathcal{A}$.  However,  there does not exist any  strong  elementary  simplicial collapse from $\mathcal{H}$ to $\mathcal{A}$.
\end{example}
\begin{example}
Let $(\mathcal{H},\mathcal{A})$  by
\begin{eqnarray*}
 \mathcal{H}&=&\{\{v_0\},\{v_1\},\{v_2\}, \{v_3\},\{v_0,v_1\},\{v_0,v_2\},\{v_1,v_2\},\\
&&\{v_0,v_1,v_2\},\{v_0,v_1,v_2,v_3\}\},\\
 \mathcal{A}&=&\{\{v_0\},\{v_1\},\{v_2\}, \{v_0,v_2\},\{v_1,v_2\},\{v_0,v_1,v_2,v_3\}\}.
\end{eqnarray*}
Let  $\eta=\{v_0,v_1\}$  and  $v=v_2$.   Then
\begin{eqnarray*}
&\Delta (\{v\}* \eta)= \{\{v_0\},\{v_1\},\{v_2\},  \{v_0,v_1\},\{v_0,v_2\},\{v_1,v_2\},\{v_0,v_1,v_2\}\},\\
&\{v\}*{\rm bd}(\eta)=\{\{v_0\},\{v_1\},\{v_2\}, \{v_0,v_2\},\{v_1,v_2\}\}.
\end{eqnarray*}
   Since both $\eta$  and $\{v\}*\eta$ are proper  faces of $\{v_0,v_1,v_2,v_3\}$,    $\eta$  is not a free face of $\{v\}*\eta$.  Thus   there does not exist any weak elementary simplicial collapse from $\mathcal{H}$ to $\mathcal{A}$.
\end{example}

\smallskip

   Let  $(\mathcal{K},\mathcal{A})$   be   a  pair  of   simplicial  complexes.  By    \cite[p.  3]{cohenm}   and  \cite[p.  106]{forman1},  the  weak  simple-homotopy  type  as  well as    the  strong    simple-homotopy  type
   is  the  same  as the  usual  simple-homotopy  type     (cf.  \cite[pp.  3-4]{cohenm}  and  \cite[p.  106]{forman1}).

\subsection{Collapses and the embedded homology}

 \begin{lemma}
\label{pr-2.3.main}
 Let $(\mathcal{H},\mathcal{A})$  be a  pair of hypergraphs  such that all the vertices are $0$-hyperedges.   If  there is  a  strong  elementary  simplicial collapse from $\mathcal{H}$ to $\mathcal{A}$,     then the
 canonical  inclusion  of   $\mathcal{A}$  into   $\mathcal{H}$   induces  an   isomorphism  from   $H_*(\mathcal{A})$  to   $H_*(\mathcal{H})$.
 \end{lemma}

 \begin{proof}
 Suppose  there is  a  strong   elementary simplicial collapse from $\mathcal{H}$ to $\mathcal{A}$.  Then   there exist a vertex $v$ of $\mathcal{H}$ and a hyperedge $\eta\in\mathcal{H}$  such that  (1).
$\mathcal{H}=\mathcal{A}\cup \Delta (\{v\}*  \eta)$,
 (2).
$\mathcal{A}\cap\Delta(\{v\}* \eta)=\{v\}* {\rm bd}(\eta)$,        and  (3). $\eta$  is a free face of $\{v\}* \eta$ in $\mathcal{H}$.   It follows from (1)  and (2)  that  $\mathcal{A}=\mathcal{H}\setminus  \{\eta, \{v\}* \eta\}$.       It follows  from (3)  that   $\{v\}* \eta$  is a maximal hyperedge in $\mathcal{H}$.
  In  (3),  we  substitute $\mathcal{H}$   with $\mathcal{A}$  and substitute  $\mathcal{H}'$  in   with $\Delta(\{v\}* \eta)$.   It   follows    that
 for any $\tau\in \mathcal{A} $  and any $\tau'\in \Delta(\{v\}* \eta)$,    either $\tau\cap\tau'$ is the  emptyset   or $\tau\cap\tau'$  is a proper subset of $\{v\}* \eta$ such  that  $\tau\cap\tau'$   is not equal to $\eta$.  Thus    either  $\tau\cap\tau'$ is the  emptyset   or $\tau\cap\tau'$ is  a simplex   of  $ \mathcal{A} \cap \Delta(\{v\}* \eta)$.
  The Mayer-Vietoris sequence   gives  a long exact sequence
\begin{eqnarray}\label{eq-2.3.b1}
&\cdots\longrightarrow H_n( \mathcal{A} \cap \Delta(\{v\}* \eta))\longrightarrow H_n(\mathcal{A})\oplus  H_n(\Delta(\{v\}* \eta)) \nonumber\\&
\longrightarrow  H_n(\mathcal{H}) \longrightarrow H_{n-1}( \mathcal{A} \cap \Delta(\{v\}* \eta))\longrightarrow \cdots
\end{eqnarray}
Since both  $\mathcal{A} \cap \Delta(\{v\}* \eta)$  and $    \Delta(\{v\}* \eta)$ are simplicial complexes  such  that
\begin{eqnarray*}
|\mathcal{A} \cap \Delta(\{v\}* \eta)|=|\{v\}* {\rm bd}(\eta)| \simeq  |\Delta(\{v\}*\eta)| \simeq  |v|,
\end{eqnarray*}
   $H_n( \mathcal{A} \cap \Delta(\{v\}* \eta))=H_n(\Delta(\{v\}* \eta))$  is zero for $n\geq 1$ and is  $R$  for $n=0$.  Therefore,   (\ref{eq-2.3.b1})  implies $H_n(\mathcal{H})\cong H_n(\mathcal{A})$  for any $n\geq 2$  as well as  an  exact sequence
\begin{eqnarray}\label{eq-2.3.012}
0\longrightarrow H_1(\mathcal{A})\longrightarrow H_1(\mathcal{H})\longrightarrow  R\longrightarrow H_0(\mathcal{A})\oplus  R\longrightarrow H_0(\mathcal{H})\longrightarrow 0.
\end{eqnarray}
Since all the vertices are $0$-hyperedges,  if we use $\mathcal{H}_i$    to denote the  sub-hypergraph of $\mathcal{H}$   consisting of all the $i$-hyperedges in $\mathcal{H}$,   then  $\mathcal{H}_0\cup \mathcal{H}_1$
  is a  $1$-dimensional  simplicial  complex.
  By \cite[Proposition~3.5]{h1},
\begin{eqnarray*}
H_0(\mathcal{H})= H_0(\mathcal{H}_0\cup\mathcal{H}_1)=R^{\oplus k},
\end{eqnarray*}
   where $k$  is the  number of the connected  components of $\mathcal{H}_0\cup \mathcal{H}_1$.
   Similarly, $\mathcal{A}_0\cup \mathcal{A}_1$  is a  $1$-dimensional  simplicial complex and
      \begin{eqnarray*}
H_0(\mathcal{A})= H_0(\mathcal{A}_0\cup\mathcal{A}_1)=R^{\oplus l},
\end{eqnarray*}
   where $l$  is the  number of the connected  components of $\mathcal{A}_0\cup \mathcal{A}_1$.
     The  canonical inclusion of $\mathcal{A}_0\cup\mathcal{A}_1$  into $\mathcal{H}_0\cup\mathcal{H}_1$   induces  a   homotopy equivalence   $ |\mathcal{A}_0\cup\mathcal{A}_1|  \simeq   |\mathcal{H}_0\cup\mathcal{H}_1|$.
Thus      $k=l$   and  the  canonical inclusion of $\mathcal{A}_0\cup\mathcal{A}_1$  into $\mathcal{H}_0\cup\mathcal{H}_1$   induces an  isomorphism
 $ H_0(\mathcal{A})=H_0(\mathcal{A}_0\cup\mathcal{A}_1)  \cong H_0(\mathcal{H}_0\cup\mathcal{H}_1)   =H_0(\mathcal{H})$.
     It follows that  the fifth arrow in (\ref{eq-2.3.012}) is surjective and has   its  kernel  $R$ ,  which implies that the forth arrow in (\ref{eq-2.3.012})  is injective and the third arrow in (\ref{eq-2.3.012})  is the zero-map.
 Therefore,  $H_1(\mathcal{A})\cong H_1(\mathcal{H})$.   Summarizing all the above,  we  have  $H_*(\mathcal{A})\cong H_*(\mathcal{H})$.
 \end{proof}

\begin{proposition}\label{pr-3.2.1a111}
Let   $\mathcal{H}$ and $\mathcal{H}'$   be  two  hypergraphs  with  all the vertices as $0$-hyperedges and  of  the same   strong  simple-homotopy type.   Then their embedded homology groups $H_*(\mathcal{H})$ and $H_*(\mathcal{H}')$  are  isomorphic.
\end{proposition}

\begin{proof}
  There is a finite sequence $\mathcal{H}=\mathcal{H}_0 \to \mathcal{H}_1\to\cdots\to \mathcal{H}_q=\mathcal{H}'$  of hypergraphs  where each arrow represents a strong simplicial expansion or a  strong simplicial collapse.    By  Proposition~\ref{pr-2.3.main},  for each $i=1,\ldots,q$,  the  embedded homology groups $H_*(\mathcal{H}_i)$ and $H_*(\mathcal{H}_{i-1})$  are  isomorphic.  Thus the  embedded homology groups $H_*(\mathcal{H})$ and $H_*(\mathcal{H}')$  are  isomorphic.
\end{proof}

\begin{example}
Consider the  hypergraphs
\begin{eqnarray*}
\mathcal{H}&=&\{\{v_0\}, \{v_1\}, \{v_2\}, \{v_3\}, \{v_0,v_1\}, \{ v_0,v_2\},  \{v_1,v_2\},\\
&& \{ v_0,v_1,v_2\},  \{v_1,v_2,v_3\}\},\\
\mathcal{H}'&=&\{\{v_1\}, \{v_2\}, \{v_3\},  \{v_1,v_2\},  \{v_1,v_2,v_3\}\}.
\end{eqnarray*}
Then  we  have    strong  elementary   simplicial  collapses  $\mathcal{H}\to  \mathcal{H}_1\to  \mathcal{H}'$,  where
\begin{eqnarray*}
\mathcal{H}_1 =  \{\{v_0\},  \{v_1\},  \{v_2\},  \{v_3\},    \{v_0,v_2\},   \{v_1,v_2\},    \{v_1,v_2,v_3\}\}.
\end{eqnarray*}
Thus  $\mathcal{H}$  and  $\mathcal{H}'$  have  the same    strong  simple-homotopy  type.   We  have
\begin{eqnarray*}
H_n(\mathcal{H})\cong  H_n(\mathcal{H}')=\begin{cases}
 R\oplus  R,  &n=0,\\
  0,  &n\geq   1.
\end{cases}
\end{eqnarray*}
\end{example}

\subsection{Collapses and   the homology of the associated simplicial complexes}\label{ss88.2}

 We  have  a  long exact sequence
\begin{eqnarray}
&
 \cdots\longrightarrow
 H_n(\Delta(\mathcal{H}\setminus \mathcal{A}))\longrightarrow
  H_n(\Delta \mathcal{H})\longrightarrow
 \tilde H_n(|\Delta\mathcal{H}|/|\Delta(\mathcal{H}\setminus \mathcal{A})|)\nonumber\\
 &\longrightarrow
 H_{n-1}(\Delta(\mathcal{H}\setminus \mathcal{A}))\longrightarrow
 \cdots,
 \label{eq-les2}
\end{eqnarray}
where
\begin{eqnarray*}
\tilde H_n(|\Delta\mathcal{H}|/|\Delta(\mathcal{H}\setminus \mathcal{A})|) = H_n (C_*(\Delta\mathcal{H})/ C_*(\Delta(\mathcal{H}\setminus \mathcal{A})))
\end{eqnarray*}
is  the  $n$-th  reduced homology group     of
 the quotient  topological  space.

\begin{lemma}\label{pr-x218}
Let $(\mathcal{H},\mathcal{A})$  be a  pair of hypergraphs.  If there is a weak  elementary simplicial collapse from $\mathcal{H}$ to $\mathcal{A}$,  then  the  canonical  inclusion  of   $\Delta\mathcal{A}$ into  $\Delta\mathcal{H}$   induces  an  isomorphism   from    $H_*(\Delta \mathcal{A})$  to    $H_*(\Delta\mathcal{H})$.
\end{lemma}
\begin{proof}
Suppose   $\mathcal{A}=\mathcal{H}\setminus\{\eta,\xi\}$,  where   $\eta$  is a free face of  $\xi$  in $\mathcal{H}$.
 Let  $\max(\mathcal{H})$  be the collection of all the maximal hyperedges
 in $\mathcal{H}$.  Then
\begin{eqnarray*}
\Delta(\mathcal{H}\setminus \{\eta, \xi\})=
\begin{cases}
\Delta\mathcal{H}\setminus \{\eta, \xi\},
 & {\rm~if~}  \xi\in\max(\mathcal{H}),\\
\Delta\mathcal{H},
& {\rm~if~}  \xi\notin\max(\mathcal{H}).
\end{cases}
\end{eqnarray*}
It follows that
$|\Delta\mathcal{H}/(\Delta\mathcal{H}\setminus \{\eta, \xi\})|\simeq  *$,
which implies
\begin{eqnarray}\label{eq-vns2}
\tilde H_n(\Delta\mathcal{H}/\Delta(\mathcal{H}\setminus \{\eta, \xi\}))=0
\end{eqnarray}
for each  $n\geq 0$.    In  (\ref{eq-les2}),  substitute   $\mathcal{A}$  with  $\{\eta,\xi\}$.  With the help of  (\ref{eq-vns2}),
\begin{eqnarray*}
H_n(\Delta\mathcal{A} )=H_n(\Delta(\mathcal{H}\setminus \{\eta, \xi\}))\cong  H_n(\Delta\mathcal{H}),
\end{eqnarray*}
where  the  isomorphism  is  induced  by  the canonical  inclusion  of  $\Delta\mathcal{A}$  into  $\Delta\mathcal{H}$.
\end{proof}

\begin{proposition}\label{pr-3.2.vf}
Let $\mathcal{H}$ and $\mathcal{H}'$  be  two  hypergraphs  of  the same  weak simple-homotopy type,  then
  the  homology groups  $H_*(\Delta\mathcal{H})$  and  $H_*(\Delta\mathcal{H}')$  are  isomorphic.
\end{proposition}

\begin{proof}
  There is a finite sequence $\mathcal{H}=\mathcal{H}_0 \to \mathcal{H}_1\to\cdots\to \mathcal{H}_q=\mathcal{H}'$  of  hypergraphs  where each arrow represents a weak  simplicial expansion or a  weak simplicial collapse.  By  Lemma~\ref{pr-x218},  for each $i=1,\ldots,q$,   $H_*(\Delta\mathcal{H}_i)$ and $H_*(\Delta\mathcal{H}_{i-1})$  are  isomorphic.  Thus     $H_*(\Delta\mathcal{H})$ and $H_*(\Delta\mathcal{H}')$  are  isomorphic.
\end{proof}

\begin{example}
Consider the  hypergraphs
\begin{eqnarray*}
\mathcal{H}& = &  \{ \{v_0\},  \{v_2\},  \{v_0,v_1,v_2\},  \{v_2,v_3,v_4\},  \{v_0,v_1,v_2,v_3,v_4\}\},\\
\mathcal{H}'&=&\{\{v_0\}\}.
\end{eqnarray*}
Then  we  have     weak  elementary  simplicial collapses  $\mathcal{H}   \to  \mathcal{H}_1\to  \mathcal{H}'$,   where
\begin{eqnarray*}
\mathcal{H}_1= \{\{v_0\},  \{v_2\},  \{v_0,v_1,v_2\}\}.
\end{eqnarray*}
Thus  $\mathcal{H}$  and  $\mathcal{H}'$  have  the same   weak simple-homotopy  type.
We  have   $\Delta\mathcal{H}=\Delta [v_0,v_1,v_2,v_3,v_4]$,   $\Delta\mathcal{H}'=\{\{v_0\}\}$  and
\begin{eqnarray*}
H_n(\Delta\mathcal{H})\cong  H_n(\Delta\mathcal{H}')=\begin{cases}
 G,  &n=0,\\
  0,  &n\geq   1.
\end{cases}
\end{eqnarray*}
\end{example}

\subsection{Collapses and   the   homology of  The lower-associated simplicial complexes}

  We  have  a  long exact sequence
\begin{eqnarray}
 &\cdots\longrightarrow
  H_n(\delta(\mathcal{H}\setminus \mathcal{A}))\longrightarrow
   H_n(\delta \mathcal{H})\longrightarrow
  \tilde H_n(|\delta\mathcal{H}|/|\delta(\mathcal{H}\setminus \mathcal{A})|)\nonumber\\
  &\longrightarrow
  H_{n-1}(\delta(\mathcal{H}\setminus \mathcal{A}))\longrightarrow
  \cdots.
  \label{eq-les1}
\end{eqnarray}

\begin{lemma}\label{pr-2.8a182}
Let $(\mathcal{H},\mathcal{A})$  be a  pair of hypergraphs.  Suppose there is a  weak   elementary simplicial collapse from $\mathcal{H}$ to $\mathcal{A}$,  say  $\mathcal{A}=\mathcal{H}\setminus\{\eta,\xi\}$  where $\eta,  \xi \in \mathcal{H}$  such that $\eta$  is a free face  of $\xi$.
\begin{enumerate}[(i).]
\item
If one of the followings  is satisfied:
\begin{enumerate}[(a).]
\item
$\dim \eta=0$,
\item
$\sigma\in\mathcal{H}$ for any  nonempty  subset $\sigma\subseteq \xi$,
\item
there exists a  nonempty  subset $\tau\subseteq \eta$ such that $\tau\notin \mathcal{H}$,
\end{enumerate}
then  the  canonical  inclusion  of  $\delta\mathcal{A}$  into  $\delta\mathcal{H}$ induces an isomorphism   $H_n(\delta\mathcal{A})\cong H_n(\delta\mathcal{H})$ for any $n\geq 0$;

\item
If all the followings are satisfied:
\begin{enumerate}[(a)'.]
\item
$\dim\eta\geq  1$,
\item
there exists a  nonempty  subset $\sigma\subseteq \xi$  such that  $\sigma\notin \mathcal{H}$,
\item
$\tau\in\mathcal{H}$ for any  nonempty   subset   $\tau\subseteq\eta$,
\end{enumerate}
then
\begin{enumerate}[(1).]
\item
 the  canonical  inclusion  of  $\delta\mathcal{A}$  into  $\delta\mathcal{H}$ induces an isomorphism $H_n(\delta\mathcal{A})\cong H_n(\delta\mathcal{H})$ for any  $n\neq  \dim\eta, \dim\eta-1$,
\item
there  is  an  injection from $H_{\dim \eta}(\delta\mathcal{A})$  into  $H_{\dim \eta}(\delta\mathcal{H})$  whose cokernel  is a subgroup of  $R$ ,
\item
there  is  an  surjection from $H_{\dim \eta-1}(\delta\mathcal{A})$  into  $H_{\dim \eta-1}(\delta\mathcal{H})$  whose kernel  is a quotient group of  $R$ .
\end{enumerate}
\end{enumerate}
\end{lemma}

\begin{proof}
Suppose   $\mathcal{A}=\mathcal{H}\setminus\{\eta,\xi\}$,  where   $\eta$  is a free face of  $\xi$  in $\mathcal{H}$.   In  (\ref{eq-les1}),  substitute   $\mathcal{A}$  with  $\{\eta,\xi\}$.  We  obtain a long exact sequence
\begin{eqnarray}\label{eq-0688}
&\cdots\longrightarrow
 H_n(\delta(\mathcal{H}\setminus \{\eta,\xi\}))\longrightarrow
  H_n(\delta \mathcal{H})\longrightarrow
 \tilde H_n(\delta\mathcal{H}/\delta(\mathcal{H}\setminus \{\eta,\xi\}))\nonumber\\
 &\longrightarrow
 H_{n-1}(\delta(\mathcal{H}\setminus \{\eta,\xi\}))\longrightarrow\cdots.
\end{eqnarray}

{\sc Case~1}.  $\sigma\in\mathcal{H}$  for  any   nonempty   subset  $\sigma\subseteq\xi$.

Then  $\xi\in\delta\mathcal{H}$.  Since $\eta$  is not a face of any hyperedge $\kappa\neq \xi$  in $\mathcal{H}$,  it follows that  $\eta$  is not a face of any simplex $\kappa\neq \xi$  in $\delta\mathcal{H}$.   Thus  for any $\kappa\in\delta\mathcal{H}$  such that $\kappa \neq \eta,\xi$  we  have $\kappa\in \delta(\mathcal{H}\setminus \{\eta, \xi\})$.
Hence   $\delta(\mathcal{H}\setminus \{\eta, \xi\})=\delta\mathcal{H}\setminus \{\eta, \xi\}$.   Therefore,
\begin{eqnarray*}
|\delta\mathcal{H}/(\delta\mathcal{H}\setminus \{\eta, \xi\})|\simeq  *.
\end{eqnarray*}

{\sc Case~2}.    there  exists a   nonempty   subset  $\sigma\subseteq\xi$
  such  that  $\sigma\notin\mathcal{H}$.

Then  $\xi\notin  \delta\mathcal{H}$.

{\sc  Subcase~2.1}.  $\tau\in\mathcal{H}$  for   any    nonempty   subset    $\tau\subseteq\eta$.

Then  $\eta\in  \delta\mathcal{H}$.  Since $\xi\notin  \delta\mathcal{H}$  and $\eta$  is not a proper face of any  hyperedge  $\tau\neq\xi$,  it follows that $\eta$    is  a maximal simplex   of $\delta\mathcal{H}$.    Similar with the argument of {\sc Case~1} we  have    $\delta(\mathcal{H}\setminus \{\eta, \xi\})=\delta\mathcal{H}\setminus \{\eta\}$.
Therefore,
\begin{eqnarray}\label{eq-1868ab}
|\delta\mathcal{H}/(\delta\mathcal{H}\setminus \{\eta, \xi\})|\simeq
\begin{cases}
   S^{\dim \eta},  &{\rm~if~}\dim \eta\geq 1,\\
   *,  &{\rm~if~}\dim\eta=0.
   \end{cases}
\end{eqnarray}

{\sc  Subcase~2.2}.  there exists  a   nonempty   subset  $\tau\subseteq\eta$
  such that  $\tau\notin\mathcal{H}$.

Then  $\eta\notin  \delta\mathcal{H}$.   Similar with the argument of {\sc Case~1} we  have    $\delta(\mathcal{H}\setminus \{\eta, \xi\})=\delta\mathcal{H} $.  Therefore,
\begin{eqnarray*}
|\delta\mathcal{H}/(\delta\mathcal{H}\setminus \{\eta, \xi\})|= *.
\end{eqnarray*}

(i).  Suppose one of (a),  (b)   and  (c)  is  satisfied.   By  summarizing the above  cases,
\begin{eqnarray}\label{eq-vns1}
\tilde H_n(\delta\mathcal{H}/\delta(\mathcal{H}\setminus \{\eta, \xi\}))=0
\end{eqnarray}
for each  $n\geq  0$.    By  (\ref{eq-0688})  and   (\ref{eq-vns1})  we obtain
\begin{eqnarray*}
H_n(\delta(\mathcal{H}\setminus \{\eta, \xi\}))\cong  H_n(\delta\mathcal{H}).
\end{eqnarray*}

(ii).    Suppose all  of (a)',  (b)'   and  (c)'  are  satisfied.   By (\ref{eq-1868ab}),
\begin{eqnarray*}
|\delta\mathcal{H}/(\delta\mathcal{H}\setminus \{\eta, \xi\})|\simeq
   S^{\dim \eta}.
\end{eqnarray*}
With the help of  (\ref{eq-0688}),
\begin{eqnarray}\label{eq-fafa.1}
H_n(\delta\mathcal{H})\cong H_n(\delta\mathcal{A})
\end{eqnarray}
for any  $n\neq  \dim\eta, \dim\eta-1$  and an  exact sequence
\begin{eqnarray}
&0\longrightarrow
 H_ {\dim\eta}(\delta(\mathcal{H}\setminus \{\eta,\xi\}))\longrightarrow
  H_{ \dim\eta}(\delta \mathcal{H})\longrightarrow
 R \nonumber\\
&\longrightarrow  H_ {\dim\eta-1}(\delta(\mathcal{H}\setminus \{\eta,\xi\}))\longrightarrow
  H_{ \dim\eta-1}(\delta \mathcal{H})\longrightarrow  0.
  \label{eq-fafa.2}
\end{eqnarray}
Therefore,  (1) follows from (\ref{eq-fafa.1}),  (2) follows from the second arrow of (\ref{eq-fafa.2}),  and (3) follows from the  fifth  arrow of (\ref{eq-fafa.2}).
\end{proof}

\begin{proposition}\label{pr-3.2.1a}
 Let    $\mathcal{H}$ and $\mathcal{H}'$  be    two hypergraphs    of   the same  weak simple-homotopy type via a sequence of weak simplicial collapses and  weak simplicial expansions  such that   in each  weak elementary  simplicial collapse or weak elementary  simplicial expansion,  $\eta$  is a free face of $\xi$ and one of (a), (b)  and (c) in Lemma~\ref{pr-2.8a182}  is satisfied.  Then  the  homology  groups    $H_*(\delta\mathcal{H} )$   and   $H_n(\delta \mathcal{H}')$  are  isomorphic.
\end{proposition}

\begin{proof}

   There is a finite sequence $\mathcal{H}=\mathcal{H}_0 \to \mathcal{H}_1\to\cdots\to \mathcal{H}_q=\mathcal{H}'$  of  hypergraphs  where each arrow represents a weak  simplicial expansion or a  weak simplicial collapse.
In  each  weak elementary  simplicial collapse or weak elementary  simplicial expansion,  $\eta$  is a free face of $\xi$ and one of (a), (b)  and (c) in Lemma~\ref{pr-2.8a182}  is satisfied.
Thus by Lemma~\ref{pr-2.8a182}~(i),
  for each $i=1,\ldots,q$,   $H_n(\delta\mathcal{H}_i)$ and $H_n(\delta\mathcal{H}_{i-1})$  are  isomorphic.
   Thus   $H_n(\delta\mathcal{H})$ and $H_n(\delta\mathcal{H}')$  are  isomorphic.
\end{proof}

 \begin{example}
Consider  the  hypergraphs
\begin{eqnarray*}
\mathcal{H}&=&\{ \{v_0\},  \{v_1\}, \{v_3\}, \{v_4\}, \{v_0,v_3\},  \{v_0,v_4\},  \{v_3,v_4\},\\
&&\{v_0,v_1,v_2\}, \{v_0,v_3,v_4\}, \{v_0,v_2,v_5\},  \{v_0,v_2,v_5,v_6,v_7\} \},\\
\mathcal{H}'&=& \{ \{v_0\}\}.
\end{eqnarray*}
Then  we  have     weak  elementary  simplicial collapses  $\mathcal{H}  \to  \mathcal{H}_1\to  \mathcal{H}_2\to  \mathcal{H}_3 \to  \mathcal{H}_4\to  \mathcal{H}'$,    where
\begin{eqnarray*}
\mathcal{H}_1=  \mathcal{H} \setminus  \{\{v_0,v_2,v_5\},  \{v_0,v_2,v_5,v_6,v_7\}\}
\end{eqnarray*}
with   $\{v_0,v_2,v_5\}$   a  free  face  of   $ \{v_0,v_2,v_5,v_6,v_7\} $  satisfying   Lemma~\ref{pr-2.8a182}~(c),
\begin{eqnarray*}
\mathcal{H}_2=  \mathcal{H}_1 \setminus  \{\{v_3,v_4\},  \{v_0,v_3,v_4 \}\}
\end{eqnarray*}
 with   $\{v_3,v_4\}$     a  free  face  of   $ \{v_0,v_3,v_4\} $  satisfying   Lemma~\ref{pr-2.8a182}~(b),
\begin{eqnarray*}
\mathcal{H}_3=  \mathcal{H}_2 \setminus  \{\{v_1\}, \{v_0,v_1,v_2\} \}
\end{eqnarray*}
with   $\{v_1\}$      a  free  face  of   $ \{v_0,v_1,v_2\} $  satisfying   Lemma~\ref{pr-2.8a182}~(a),  and
\begin{eqnarray*}
\mathcal{H}_4 =\mathcal{H}_3\setminus \{\{v_3\}, \{v_0,v_3\}\}, ~~~~~~
\mathcal{H}' =  \mathcal{H}_4\setminus \{\{v_4\}, \{v_0,v_4\}\}
\end{eqnarray*}
are  elementary simplicial collapses of  simplicial complexes.   We  have   $\delta\mathcal{H}=\Delta[v_0,v_3,v_4]$,    $\delta\mathcal{H}'=\{\{v_0\}\}$   and
\begin{eqnarray*}
H_n(\delta\mathcal{H})\cong  H_n(\delta\mathcal{H}')=\begin{cases}
 G,  &n=0,\\
  0,  &n\geq   1.
\end{cases}
\end{eqnarray*}
\end{example}

 \section{The   stability  of  persistent homology  of  hypergraphs}

\subsection{Filtrations of  hypergraphs and interleavings of  the  persistent   homology}

Let $\mathcal{H}$ be a hypergraph.
 A  {\it filtration} $\{\mathcal{H}_t\}_{t\in\mathbb{R}}$  of $\mathcal{H}$ is a family $\mathcal{H}_t$, $t\in\mathbb{R}$, of  sub-hypergraphs of $\mathcal{H}$  such that for any $t\leq s$,  there is an inclusion $i_{t,s}:  \mathcal{H}_t\longrightarrow \mathcal{H}_s$ such that $i_{s,r}\circ i_{t,s}=i_{t,r}$.  
 Let $f$  be a  function on $\mathcal{H}$ assigning a real number $f(\sigma)$  to each $\sigma\in\mathcal{H}$.
  For each $t\in \mathbb{R}$,  let  the level hypergraph be
  \begin{eqnarray}\label{eq-level-1}
   \mathcal{H}_{t}^f:=f^{-1}((-\infty, t])=\{\sigma\in\mathcal{H}\mid  f(\sigma)\leq t\}.
  \end{eqnarray}
Let $t\leq s$.    Let $i_t^s(f): \mathcal{H}_t^f \longrightarrow \mathcal{H}_s^f $   be the canonical inclusion  of hypergraphs,  which  gives a  filtration   $\mathcal{H}^f:=\{\mathcal{H}^f_t\}_{t\in\mathbb{R}}$  of $\mathcal{H}$.
 We have the induced inclusions
$\delta(i_t^s(f)):  \delta( \mathcal{H}_t^f)\longrightarrow  \delta( \mathcal{H}_s^f)$   and
 $\Delta(i_t^s(f)):  \Delta( \mathcal{H}_t^f)\longrightarrow  \Delta( \mathcal{H}_s^f)$
  of simplicial complexes,  which   give
   the  filtration   $\{\Delta( \mathcal{H}_t^f)\}_{t\in\mathbb{R}}$  of $\Delta\mathcal{H}$   and   the  filtration
   $\{\delta( \mathcal{H}_t^f)\}_{t\in\mathbb{R}}$  of $\delta\mathcal{H}$  respectively.
Let  $\mathbb{F}$  be   a  field.      We  have three persistent  vector spaces
  \begin{enumerate}[(1).]
  \item
 $ \{H_n(\mathcal{H}_r^f)\}_{r\in\mathbb{R}}:=\{H_*(\mathcal{H}_r^f;\mathbb{F}), \big (i_t^s(f)\big)_*\mid  r,t,s\in\mathbb{R},  t\leq s\}$;
  \item
  $\{H_n(\delta(\mathcal{H}_r^f))\}_{r\in\mathbb{R}}:=\{H_*(\delta(\mathcal{H}_r^f);\mathbb{F}), \big (\delta(i_t^s(f))\big)_*  \mid  r,t,s\in\mathbb{R},  t\leq s\}$;
 \item
  $\{H_n(\Delta(\mathcal{H}_r^f))\}_{r\in\mathbb{R}}:=\{H_*(\Delta(\mathcal{H}_r^f);\mathbb{F}), \big (\Delta(i_t^s(f))\big)_*  \mid  r,t,s\in\mathbb{R},  t\leq s\}$
  \end{enumerate}
whose   persistent diagrams  are  respectively
  \begin{enumerate}[(1)'.]
 \item
 $D(\mathcal{H}, f) :=  D(\{H_n(\mathcal{H}_t^f)\}_{t\in\mathbb{R}})$;
 \item
 $D(\delta\mathcal{H}, f) :=  D(\{H_n(\delta(\mathcal{H}_t^f))\}_{t\in\mathbb{R}})$;
 \item
$ D(\Delta\mathcal{H}, f) :=  D(\{H_n(\Delta(\mathcal{H}_t^f))\}_{t\in\mathbb{R}})$.
  \end{enumerate}

 Let $f,g: \mathcal{H}\longrightarrow \mathbb{R}$ be  two real valued functions on $\mathcal{H}$.     The $L^\infty$-distance between $f$ and $g$  is  defined  by
\begin{eqnarray*}
||f-g||_\infty= \sup_{\sigma}  |f(\sigma)-g(\sigma)|
\end{eqnarray*}
where $\sigma$ ranges over all hyperedges of $\mathcal{H}$.
Let  $d_B^\infty$  be  the bottleneck distance  (cf.  \cite{pd1,pd2}).

  \begin{theorem}
 \label{th-main111-x}
 Let $\mathcal{H}$ be a hypergraph. Let $f$ and $g$  be real-valued functions on $\mathcal{H}$. Then
  $d_B^\infty(D\mathcal{H}^f, D\mathcal{H}^g)\leq  ||f-g||_{\infty}$.   Here
  \begin{eqnarray*}
   d_B^\infty(D\mathcal{H}^f, D\mathcal{H}^g)&=& \max
\{ d_B^\infty (D(\mathcal{H}, f) , D(\mathcal{H}, g)), \\
&&  d_B^\infty (D(\delta\mathcal{H}, f), D(\delta\mathcal{H}, g)), \\
  &&  d_B^\infty (D(\Delta\mathcal{H}, f), D(\Delta\mathcal{H}, g) )\}.
  \end{eqnarray*}
 \end{theorem}

 \subsubsection{Proof of  Theorem~\ref{th-main111-x}}

   \begin{lemma}\label{le-888.1}
    Let $n\geq 0$.  Suppose
\begin{eqnarray}\label{eq-pd-1}
|| f-g||_{\infty}\leq\epsilon
\end{eqnarray}
 for some $\epsilon >0$.     Then the following   persistent vector spaces are strongly $\epsilon$-interleaved:
 \begin{enumerate}[(i).]
 \item
 the persistent embedded homology $\{H_n(\mathcal{H}_t^f)\}_{t\in\mathbb{R}}$ and  $\{H_n(\mathcal{H}_t^g)\}_{t\in\mathbb{R}}$;
 \item
 the persistent homology  $\{H_n(\delta(\mathcal{H}_t^f))\}_{t\in\mathbb{R}}$ and  $\{H_n(\delta(\mathcal{H}_t^g))\}_{t\in\mathbb{R}}$  of the associated simplicial complexes;
 \item
 the persistent homology   $\{H_n(\Delta(\mathcal{H}_t^f))\}_{t\in\mathbb{R}}$ and  $\{H_n(\Delta(\mathcal{H}_t^g))\}_{t\in\mathbb{R}}$ of  the lower-associated simplicial complexes.
 \end{enumerate}
  \end{lemma}

  \begin{proof}

    For any   $t\in \mathbb{R}$  there are  inclusions
  \begin{eqnarray*}
\varphi_t:  \mathcal{H}_{t}^f\longrightarrow  \mathcal{H}_{t+\epsilon}^g, ~~~~~~\psi_t:     \mathcal{H}_{t}^g\longrightarrow  \mathcal{H}_{t+\epsilon}^f
  \end{eqnarray*}
  of  hypergraphs.  By a diagram chasing,
  \begin{eqnarray*}
   \psi_s \circ  i_t^s(g) \circ  \varphi_{t-\epsilon} = i_{t-\epsilon}^{s+\epsilon}(f),  ~~~~~~i_{t+\epsilon}^{s+\epsilon}(f)\circ \psi_t= \psi_s\circ   i_t^s (g),\\
   \varphi_s\circ  i_t^s(f)\circ \psi_{t-\epsilon}={i}_{t-\epsilon}^{s+\epsilon}(g),~~~~~~i_{t+\epsilon}^{s+\epsilon}(f)\circ \varphi_t=\varphi_s\circ i_t^s(f).
  \end{eqnarray*}
   Taking the embedded homology,   we   have  that   for any   $t\in \mathbb{R}$  there are  homomorphisms
    \begin{eqnarray*}
(\varphi_t)_*:  H_*(\mathcal{H}_{t}^f;\mathbb{F})\longrightarrow  H_*(\mathcal{H}_{t+\epsilon}^g;\mathbb{F}), ~~~~~~\psi_t:     H_*(\mathcal{H}_{t}^g;\mathbb{F})\longrightarrow  H_*(\mathcal{H}_{t+\epsilon}^f;\mathbb{F}).
  \end{eqnarray*}
     Moreover,
  \begin{eqnarray*}
   (\psi_s)_* \circ  (i_t^s(g))_* \circ  (\varphi_{t-\epsilon})_* = (i_{t-\epsilon}^{s+\epsilon}(f))_*,  ~~~~~~(i_{t+\epsilon}^{s+\epsilon}(f))_*\circ (\psi_t)_*= (\psi_s)_*\circ   (i_t^s (g))_*,\\
   (\varphi_s)_*\circ  (i_t^s(f))_*\circ (\psi_{t-\epsilon})_*=({i}_{t-\epsilon}^{s+\epsilon}(g))_*,~~~~~~(i_{t+\epsilon}^{s+\epsilon}(f))_*\circ (\varphi_t)_*=(\varphi_s)_*\circ (i_t^s(f))_*.
  \end{eqnarray*}
By   \cite[Definition~4.2]{pd2},  we obtain (i).
 The proofs for  (ii)  and (iii)   are  similar.
  \end{proof}

 \begin{proof}[Proof of  Theorem~\ref{th-main111-x}]
 By    \cite[Theorem~4.4]{pd2}   and  Lemma~\ref{le-888.1}~(a),      $d_B^\infty  (D(\mathcal{H}, f) , D(\mathcal{H}, g) )\leq  ||f-g||_{\infty}$.    By    \cite[Theorem~4.4]{pd2}  and  Lemma~\ref{le-888.1}~(b),    $d_B^\infty  (D(\delta\mathcal{H}, f) , D(\delta\mathcal{H}, g) )\leq  ||f-g||_{\infty}$.   By    \cite[Theorem~4.4]{pd2}  and  Lemma~\ref{le-888.1}~(c),     $d_B^\infty  (D(\Delta\mathcal{H}, f) , D(\Delta\mathcal{H}, g) )\leq  ||f-g||_{\infty}$.
 \end{proof}

 \subsection{Persistent  homology for  morphisms between  hypergraphs  and  the stability}\label{s666}

 Let $\mathcal{H}$ and $\mathcal{H}'$ be two hypergraphs on  $V$ and $V'$ respectively.  Let $\varphi: \mathcal{H}\longrightarrow \mathcal{H}'$ be a morphism of hypergraphs.

(A).
Suppose $\{\mathcal{H}'_t\}_{t\in \mathbb{R}}$ is a filtration of $\mathcal{H}'$. We have a {\it pull-back filtration} $\{\varphi^\star\mathcal{H}'_t\}_{t\in \mathbb{R}}$ of $\mathcal{H}$ induced from $\varphi$, where for each $t\in \mathbb{R}$,
\begin{eqnarray*}
\varphi^\star \mathcal{H}'_t=\{\sigma\in\mathcal{H}\mid \varphi(\sigma)\in \mathcal{H}'_t\}.
\end{eqnarray*}
we have    morphisms  of hypergraphs
\begin{eqnarray*}
\varphi^\star_t:   \varphi^\star \mathcal{H}'_t \longrightarrow \mathcal{H}'_t, ~~~t\in\mathbb{R},
\end{eqnarray*}
where for each $t\in \mathbb{R}$, $\varphi^\star _t$ is the restriction of $\varphi$ to $\varphi^\star \mathcal{H}'_t$.
Consequently, we have a commutative diagram of  persistent  homology
{\small\begin{eqnarray*}
 \xymatrix{
H_*(\text{Ker}(\Delta (\varphi^\star _t)_\#))\ar[r] & H_* (\Delta(\varphi^\star \mathcal{H}'_t); R) \ar[r]^{\Delta (\varphi^\star _t)_*} &   H_*(\Delta(\mathcal{H}'_t); R) \ar[r] & H_*(\text{Coker}(\Delta (\varphi^\star _t)_\#))\\
H_*(\text{Ker}(\text{Sup} (\varphi^\star _t)))\ar[rd]\ar[u]&
&
& H_*(\text{Coker}(\text{Sup} (\varphi^\star _t)))\ar[u]\\
&H_*(\varphi^\star \mathcal{H}'_t) \ar[uu]\ar[r]^{(\varphi^\star _t)_*}& H_*(\mathcal{H}'_t) \ar[uu]\ar[ru]\ar[rd]&\\
H_*(\text{Ker}(\text{Inf} (\varphi^\star _t)))\ar[ru]\ar[uu]_{(\iota_t\mid_{\text{Ker}})_*} & & & H_*(\text{Coker}(\text{Inf} (\varphi^\star _t)))\ar[uu]_{((\iota'_t)^{\text{Coker}})_*}\\
H_*(\text{Ker}(\delta (\varphi^\star _t)_\#))\ar[r]\ar[u]&  H_*(\delta(\varphi^\star \mathcal{H}'_t); R) \ar[uu]\ar[r]^{\delta (\varphi^\star _t)_*} &   H_*(\delta(\mathcal{H}'_t); R)\ar[uu] \ar[r]& H_*(\text{Coker}(\delta (\varphi^\star _t)_\#)).
 \ar[u]
 }
 \end{eqnarray*}}

 (B).
Suppose $\{\mathcal{H}_t\}_{t\in \mathbb{R}}$ is a filtration of $\mathcal{H}$. We have a {\it push-forward filtration} $\{\varphi_\star\mathcal{H}_t\}_{t\in \mathbb{R}}$ of $\mathcal{H}'$ induced from $\varphi$, where for each $t\in \mathbb{R}$,
\begin{eqnarray*}
\varphi_\star \mathcal{H}_t=\{\varphi(\sigma)\mid \sigma\in \mathcal{H}_t\}.
  \end{eqnarray*}
 we have    morphisms  of   hypergraphs
\begin{eqnarray*}
\varphi_{\star  t}:   \mathcal{H}_t \longrightarrow \varphi_{\star}\mathcal{H}_t , ~~~t\in\mathbb{R},
\end{eqnarray*}
  where for each $t\in \mathbb{R}$, $\varphi_{\star  t}$ is the restriction of $\varphi$ to $\mathcal{H}_t$.
Consequently, we have a commutative diagram of  persistent  homology
{\small\begin{eqnarray*}
 \xymatrix{
H_*(\text{Ker}(\Delta (\varphi_{\star  t})_\#))\ar[r] & H_*(\Delta(\mathcal{H}_t); R) \ar[r]^{\Delta (\varphi_{\star  t})_*} &   H_*(\Delta(\varphi_{*}\mathcal{H}_t); R) \ar[r] & H_*(\text{Coker}(\Delta (\varphi_{\star  t})_\#))\\
H_*(\text{Ker}(\text{Sup} (\varphi_{\star  t})))\ar[rd]\ar[u]&  &
& H_*(\text{Coker}(\text{Sup} (\varphi_{\star  t})))\ar[u]\\
&H_*(\mathcal{H}_t) \ar[uu]\ar[r]^{(\varphi_{\star  t})_*}& H_*(\varphi_{\star}\mathcal{H}_t )\ar[uu]\ar[ru]\ar[rd]&\\
H_*(\text{Ker}(\text{Inf} (\varphi_{\star  t})))\ar[ru]\ar[uu]_{(\iota_t\mid_{\text{Ker}})_*} & &
& H_*(\text{Coker}(\text{Inf} (\varphi_{\star  t})))\ar[uu]_{((\iota'_t)^{\text{Coker}})_*}\\
H_*(\text{Ker}(\delta (\varphi_{\star  t})_\#))\ar[r]\ar[u]&  H_*(\delta(\mathcal{H}_t); R) \ar[uu]\ar[r]^{\delta (\varphi_{\star  t})_*} &   H_*(\delta(\varphi_{\star}\mathcal{H}_t ); R)\ar[uu] \ar[r]& H_*(\text{Coker}(\delta (\varphi_{\star  t})_\#)).
 \ar[u]
 }
 \end{eqnarray*}}

 Let
 $f,g: \mathcal{H}\longrightarrow \mathbb{R}$
  and  let
  $f',g': \mathcal{H}'\longrightarrow \mathbb{R}$.   We have    filtrations
 $\mathcal{H}_{t}^f=f^{-1}((-\infty, t])$  and   $\mathcal{H}^g_t=g^{-1}((-\infty, t])$
of $\mathcal{H}$  as  well as     filtrations
 ${\mathcal{H}'}_{t}^{f'}={f'}^{-1}((-\infty, t])$  and    ${\mathcal{H}'}^{g'}_t={g'}^{-1}((-\infty, t])$,  $t\in \mathbb{R}$,
 of $\mathcal{H}'$.
Let  $\varphi: \mathcal{H}\longrightarrow \mathcal{H}'$  be a morphism of hypergraphs.
We have the pull-back filtrations
$\varphi^\star (  {\mathcal{H}'}_{t}^{f'})$   and   $\varphi^\star (  {\mathcal{H}'}_{t}^{g'})$
 of  $\mathcal{H}$  as  well as  the push-forward filtrations
$\varphi_\star(  {\mathcal{H}}_{t}^{f})$   and   $ \varphi_\star(  {\mathcal{H}}_{t}^{g})$
 of   $\mathcal{H}'$.

 \begin{theorem}\label{main2-x}
Let $\epsilon>0$.   Let $f$ and $g$  be real-valued functions on $\mathcal{H}$.    Let  $f'$ and $g'$ be real-valued functions on $\mathcal{H}'$.
    \begin{enumerate}[(i).]
\item
If $||f-g||_\infty\leq \epsilon$,  then each of the  persistent  linear map among the twenty-one   persistent  linear maps in the commutative diagram in (B), denoted as $\Phi_f$ and $\Phi_g$, satisfies
 \begin{eqnarray*}
  d_B^\infty(D(\Phi_f), D(\Phi_g))\leq \epsilon.
  \end{eqnarray*}
Here
\begin{eqnarray*}
 d_B^\infty(D(\Phi_f), D(\Phi_g))&=& \max\{d_B^\infty(D(\text{Ker}(\Phi_f)), D(\text{Ker}(\Phi_g))),  \\ &&
 d_B^\infty(D(\text{Im}(\Phi_f)),  D(\text{Im}(\Phi_g))),\\ &&
  d_B^\infty(D(\text{Coker}(\Phi_f)), D(\text{Coker}(\Phi_g))) \};
 \end{eqnarray*}

\item
If $||f'-g'||_\infty\leq \epsilon$,   then each of the  persistent  linear map among the twenty-one    persistent  linear maps in the commutative diagram in (A), denoted as $\Phi'_{f'}$ and $\Phi'_{g'}$, satisfies
 \begin{eqnarray*}
  d_B^\infty(D(\Phi'_{f'}), D(\Phi'_{g'}))\leq \epsilon.
  \end{eqnarray*}
\end{enumerate}
 \end{theorem}

 \subsubsection{Proof  of   Theorem~\ref{main2-x}}

 Let
$\mathcal{V}=\{V_t\}_{t\in\mathbb{R}}$,
$\mathcal{U}=\{U_t\}_{t\in\mathbb{R}}$,
$\mathcal{V}'=\{V'_t\}_{t\in\mathbb{R}}$  and
  $\mathcal{U}'=\{U_t\}_{t\in\mathbb{R}}$
 be  persistent  modules over $R$.  For any $t\leq s$,  suppose
$ \nu_t^s: V_t\longrightarrow V_s$,
$ \mu_t^s: U_t\longrightarrow U_s$,
 $ {\nu'}_t^s: V'_t\longrightarrow V'_s$,   and
 ${\mu'}_t^s: U'_t\longrightarrow U'_s$
 are the canonical homomorphisms   that define   the  persistent  modules $\mathcal{V}$, $\mathcal{U}$, $\mathcal{V}'$ and $\mathcal{U}'$ respectively.
 Let $\Phi: \mathcal{V}\longrightarrow \mathcal{U}$ and $\Phi': \mathcal{V}'\longrightarrow \mathcal{U}'$ be homomorphisms of  persistent  modules   (cf. \cite[Section~1.3]{pmd}).    We have  a  persistent  sub-$R$-module  $\text{Ker}(\Phi)=\{\text{Ker}(\Phi_t)\}_{t\in\mathbb{R}}$   of  $\mathcal{V}$  and    a  persistent  sub-$R$-module  $\text{Im}(\Phi)=\{\text{Im}(\Phi_t)\}_{t\in\mathbb{R}}$    of     $\mathcal{U}$.
We  also  have a   persistent  quotient $R$-module  $\text{Coker}(\Phi)=\{\text{Coker}(\Phi_t)\}_{t\in\mathbb{R}}$
   of  $\mathcal{U}$.   As  persistent  $R$-modules,
 $\mathcal{V}/ \text{Ker}(\Phi) \cong \text{Im}(\Phi)$  and   $\mathcal{U}/\text{Im}(\Phi)= \text{Coker}(\Phi)$.
  Let $\epsilon>0$.
  \begin{definition}\label{def-998}
   We say that $\Phi$ and $\Phi'$ are {\it strongly $\epsilon$-interleaved}   if all of the followings are satisfied:
\begin{enumerate}[(a).]
\item
there exist two families of homomorphisms $\{\alpha_t:  V_t\longrightarrow V'_{t+\epsilon}\}_{t\in \mathbb{R}}$ and  $\{\alpha'_t:  V'_t\longrightarrow V_{t+\epsilon}\}_{t\in \mathbb{R}}$ such that $\mathcal{V}$ and $\mathcal{V}'$ are strongly $\epsilon$-interleaved via these two families of homomorphisms;

\item
there exist two families of homomorphisms $\{\beta_t:  U_t\longrightarrow U'_{t+\epsilon}\}_{t\in \mathbb{R}}$ and  $\{\beta'_t:  U'_t\longrightarrow U_{t+\epsilon}\}_{t\in \mathbb{R}}$ such that $\mathcal{U}$ and $\mathcal{U}'$ are strongly $\epsilon$-interleaved via these two families of homomorphisms;

\item for any $t\leq s$, the following four diagrams commute
{\small\begin{eqnarray*}
\xymatrix{
V_t \ar[r]^{\nu_t^s} \ar[d]_{\Phi_t} &V_s\ar[d]^{\Phi_s}&V'_t \ar[r]^{{\nu'}_t^s} \ar[d]_{\Phi'_t} &V'_s\ar[d]^{\Phi'_s}&V_t \ar[r]^{\alpha_t} \ar[d]_{\Phi_t} &V'_{t+\epsilon}\ar[d]^{\Phi'_{t+\epsilon}}&V'_t \ar[r]^{\alpha'_t} \ar[d]_{\Phi'_t} &V_{t+\epsilon}\ar[d]^{\Phi_{t+\epsilon}}\\
U_t \ar[r]^{\mu_t^s}   &U_s, & U'_t \ar[r]^{{\mu'}_t^s}   &U'_s, &U_t \ar[r]^{\beta_t}   &U'_{t+\epsilon}, & U'_t \ar[r]^{\beta'_t}   &U_{t+\epsilon}.
}
\end{eqnarray*}}
\end{enumerate}
\end{definition}

\begin{lemma}\label{le-abc}
Suppose two homomorphisms  $\Phi: \mathcal{V}\longrightarrow \mathcal{U}$ and $\Phi': \mathcal{V}'\longrightarrow \mathcal{U}'$ are strongly $\epsilon$-interleaved.  Then the following homomorphisms are also strongly  $\epsilon$-interleaved:
\begin{enumerate}[(i).]
\item
$\Phi:{\rm Ker}(\Phi)\longrightarrow 0$ and $\Phi': {\rm  Ker}(\Phi')\longrightarrow 0$;
\item
$\Phi/{\rm  Ker}: \mathcal{V}/{\rm Ker}(\Phi)\longrightarrow {\rm Im}(\Phi)$  and $\Phi'/{\rm Ker}: \mathcal{V}'/{\rm Ker}(\Phi')\longrightarrow {\rm Im}(\Phi')$;
\item
$\Phi/{\rm  Im}: 0\longrightarrow {\rm Coker}(\Phi)$  and $\Phi'/{\rm Im}: 0\longrightarrow {\rm  Coker}(\Phi')$.
\end{enumerate}
\end{lemma}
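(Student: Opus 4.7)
The plan is to verify each of the three claims (i)--(iii) by exploiting the commutativities in condition (c) to restrict or descend the given interleaving data. In every case I must produce (A) a persistent module structure on the relevant kernel, image, or cokernel, (B) families of $\epsilon$-interleaving homomorphisms between the source persistent modules and between the target persistent modules, and (C) commutativity of the two new interleaving squares that involve the persistent homomorphisms, matching the definition of ``strongly $\epsilon$-interleaved'' given just above the lemma.

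For (i), the key observation is that the identities $\beta_t\circ \Phi_t = \Phi'_{t+\epsilon}\circ \alpha_t$ and $\beta'_t\circ \Phi'_t = \Phi_{t+\epsilon}\circ \alpha'_t$ force $\alpha_t(\mathrm{Ker}(\Phi_t))\subseteq \mathrm{Ker}(\Phi'_{t+\epsilon})$ and $\alpha'_t(\mathrm{Ker}(\Phi'_t))\subseteq \mathrm{Ker}(\Phi_{t+\epsilon})$; indeed $\Phi'_{t+\epsilon}(\alpha_t(v)) = \beta_t(\Phi_t(v)) = 0$ whenever $\Phi_t(v)=0$. Hence $\alpha_t,\alpha'_t$ restrict to families of homomorphisms between the kernels. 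Since $\Phi$ and $\Phi'$ are persistent homomorphisms, the structure maps $\nu_t^s$ and ${\nu'}_t^s$ also restrict to kernels, and the four interleaving identities (a)--(d) satisfied by $(\mathcal{V},\mathcal{V}')$ transfer verbatim to these subfamilies. The target in (i) is the zero persistent module, which is trivially $\epsilon$-interleaved with itself, and the two interleaving squares involving the zero map commute tautologically.

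For (ii), the dual observation is $\beta_t(\mathrm{Im}(\Phi_t))\subseteq \mathrm{Im}(\Phi'_{t+\epsilon})$, because $\beta_t(\Phi_t(v))=\Phi'_{t+\epsilon}(\alpha_t(v))$, and symmetrically for $\beta'_t$, so $\beta_t,\beta'_t$ restrict to an $\epsilon$-interleaving of $\mathrm{Im}(\Phi)$ and $\mathrm{Im}(\Phi')$. Likewise, using the inclusion of kernels established in the previous paragraph, $\alpha_t$ and $\alpha'_t$ descend to the quotients $\mathcal{V}/\mathrm{Ker}(\Phi)$ and $\mathcal{V}'/\mathrm{Ker}(\Phi')$, and the four interleaving identities pass through the quotient. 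The two new commutation squares reduce, via the first isomorphism theorem identifying $V_t/\mathrm{Ker}(\Phi_t)\cong \mathrm{Im}(\Phi_t)$, to the original identity $\beta_t\circ \Phi_t = \Phi'_{t+\epsilon}\circ \alpha_t$ and its companion. For (iii), the inclusion $\beta_t(\mathrm{Im}(\Phi_t))\subseteq \mathrm{Im}(\Phi'_{t+\epsilon})$ (and its dual) lets $\beta_t,\beta'_t$ descend to the cokernels, yielding an $\epsilon$-interleaving of $\mathrm{Coker}(\Phi)$ with $\mathrm{Coker}(\Phi')$; the source is the zero persistent module, so the two new squares again commute automatically.

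The main obstacle is not conceptual but bookkeeping: in each of (i)--(iii), one must verify that all four interleaving identities from Subsection~\ref{subs-5.1} and the four commutation squares in (c) survive the restriction to a submodule or the passage to a quotient. Each verification is a routine diagram chase using only the universal properties of kernels, images, and cokernels, together with the hypothesis that $\Phi$ and $\Phi'$ are strongly $\epsilon$-interleaved; no new algebraic input is required.
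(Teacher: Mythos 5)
Your proposal is correct and follows essentially the same route as the paper's proof: restrict $\alpha_t,\alpha'_t$ to kernels for (i), descend $\alpha_t,\alpha'_t$ to the quotients $\mathcal{V}/\mathrm{Ker}(\Phi)$, $\mathcal{V}'/\mathrm{Ker}(\Phi')$ (with $\beta_t,\beta'_t$ on the image side) for (ii), and descend $\beta_t,\beta'_t$ to the cokernels for (iii), checking in each case that the interleaving identities and the squares in (c) survive. Your write-up is in fact slightly more explicit than the paper's, e.g.\ in noting $\beta_t(\mathrm{Im}(\Phi_t))\subseteq \mathrm{Im}(\Phi'_{t+\epsilon})$, which the paper uses implicitly.
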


 \begin{proof}
Let  $t\in \mathbb{R}$.  Both $\beta_t$ and $\beta'_t$ send $0$ to $0$.  By the  third and the forth  commutative diagrams  in  Definition~\ref{def-998}~(c)  respectively,    $\alpha_t$ sends $\text{Ker}(\Phi_t)$ to $\text{Ker}(\Phi'_{t+\epsilon})$ and  $\alpha'_t$ sends $\text{Ker}(\Phi'_t)$ to $\text{Ker}(\Phi_{t+\epsilon})$.
 Hence   $\Phi:{\rm Ker}(\Phi)\longrightarrow 0$ and $\Phi': {\rm  Ker}(\Phi')\longrightarrow 0$,   which  are the restrictions of $\Phi$ and $\Phi'$ to ${\rm Ker}(\Phi)$ and ${\rm Ker}(\Phi')$ respectively,     are  well-defined.  By the strong $\epsilon$-interleaving property of $\Phi$ and $\Phi'$,    $\Phi$ and $\Phi'$  satisfy (a), (b) and (c) in Definition~\ref{def-998}.   Hence   $\Phi:{\rm Ker}(\Phi)\longrightarrow 0$ and $\Phi': {\rm  Ker}(\Phi')\longrightarrow 0$ satisfy (a), (b) and (c) in Definition~\ref{def-998} as well.   Thus (i) follows.

The following two families
 \begin{eqnarray}
& \{\alpha_t/\text{Ker}:  V_t/\text{Ker}(\Phi_t)\longrightarrow V'_{t+\epsilon}/\text{Ker}(\Phi'_{t+\epsilon})\}_{t\in \mathbb{R}},  \label{eq-xxx9}\\
& \{\alpha'_t/\text{Ker}:  V'_t/\text{Ker}(\Phi'_t)\longrightarrow V_{t+\epsilon}/\text{Ker}(\Phi_{t+\epsilon})\}_{t\in \mathbb{R}}  \label{eq-xxx8}
 \end{eqnarray}
 of homomorphisms   are well-defined.  Via the  two families   (\ref{eq-xxx9})    and   (\ref{eq-xxx8})  of homomorphisms,
the persistent modules $\mathcal{V}/{\rm Ker}(\Phi)$ and $\mathcal{V}'/{\rm Ker}(\Phi')$ are strongly $\epsilon$-interleaved.  By
substituting      $\mathcal{V}$  and $\mathcal{V}'$  in  Definition~\ref{def-998} with       $\mathcal{V}/{\rm Ker}(\Phi)$ and  $\mathcal{V}'/{\rm Ker}(\Phi')$  respectively  and
substituting   $\{\alpha_t\}_{t\in\mathbb{R}}$ and  $\{\alpha'_t\}_{t\in\mathbb{R}}$        in  Definition~\ref{def-998} with       (\ref{eq-xxx9})  and   (\ref{eq-xxx8})      respectively,
the  four   diagrams in (c)  still commute.   This implies that  $\Phi/\text{Ker}$ and $\Phi'/\text{Ker}$ are strongly $\epsilon$-interleaved via  the persistent modules $\mathcal{V}/{\rm Ker}(\Phi)$,  $\mathcal{V}'/{\rm Ker}(\Phi')$,  $\mathcal{U}$ and $\mathcal{U}'$  and  the families $\{\alpha_t/\text{Ker}\}_{t\in\mathbb{R}}$  in (\ref{eq-xxx9}),   $\{\alpha'_t/\text{Ker}\}_{t\in\mathbb{R}}$  in      (\ref{eq-xxx8}),   $\{\beta_t\}_{t\in\mathbb{R}}$ and $\{\beta'_t\}_{t\in\mathbb{R}}$  of homomorphisms.  Thus (ii) follows.

Similar with the proof of (i),    the homomorphisms $\Phi/{\rm  Im}: 0\longrightarrow {\rm Coker}(\Phi)$  and $\Phi'/{\rm Im}: 0\longrightarrow {\rm  Coker}(\Phi')$  as  well   as   the    families
\begin{eqnarray}
&\{\beta_t/\text{Im}:  U_t/\text{Im}(\Phi_t)\longrightarrow U'_{t+\epsilon}/\text{Im}(\Phi'_{t+\epsilon})\}_{t\in \mathbb{R}},  \label{eq-zzx1}
\\
&\{\beta'_t/\text{Im}:  U'_t/\text{Im}(\Phi'_t)\longrightarrow U_{t+\epsilon}/\text{Im}(\Phi_{t+\epsilon})\}_{t\in \mathbb{R}}
\label{eq-zzx2}
\end{eqnarray}
 of homomorphisms   are well-defined.
the persistent modules $\mathcal{U}/{\rm  Im}(\Phi)$ and $\mathcal{U}'/{\rm Im}(\Phi')$ are strongly $\epsilon$-interleaved  via    (\ref{eq-zzx1})    and   (\ref{eq-zzx2}).   By
substituting     $\mathcal{U}$  and $\mathcal{U}'$  in  Definition~\ref{def-998} with      $\mathcal{U}/{\rm  Im}(\Phi)$ and  $\mathcal{U}'/{\rm  Im}(\Phi')$  respectively   and
substituting    $\{\beta_t\}_{t\in\mathbb{R}}$ and  $\{\beta'_t\}_{t\in\mathbb{R}}$        in  Definition~\ref{def-998} with       (\ref{eq-zzx1})  and   (\ref{eq-zzx2})      respectively,
the  four   diagrams in (c)   commute.    This implies that   $\Phi/\text{Im}$ and $\Phi'/\text{Im}$ are strongly $\epsilon$-interleaved via  the persistent modules $\mathcal{V}$,  $\mathcal{V}'$,  $\mathcal{U}//{\rm Im}(\Phi)$ and $\mathcal{U}'/{\rm Im}(\Phi')$  and  the families $\{\alpha_t\}_{t\in\mathbb{R}}$, $\{\alpha'_t\}_{t\in\mathbb{R}}$,  $\{\beta_t/{\rm Im}\}_{t\in\mathbb{R}}$  in (\ref{eq-zzx1})  and $\{\beta'_t/{\rm Im}\}_{t\in\mathbb{R}}$   in  (\ref{eq-zzx2})  of homomorphisms.  Thus (iii) follows.
 \end{proof}

The next corollary~(i), (ii) and (iii) follow from Lemma~\ref{le-abc}~(i), (ii) and (iii) respectively.

\begin{corollary}\label{co-zzz}
Suppose two homomorphisms  $\Phi: \mathcal{V}\longrightarrow \mathcal{U}$ and $\Phi': \mathcal{V}'\longrightarrow \mathcal{U}'$ are strongly $\epsilon$-interleaved.  Then the following  persistent  $R$-modules are also strongly  $\epsilon$-interleaved:
\begin{enumerate}[(i).]
\item
${\rm  Ker}(\Phi)$ and ${\rm Ker}(\Phi')$;
\item
$\mathcal{V}/{\rm Ker}(\Phi)$ and $\mathcal{V}'/{\rm Ker}(\Phi')$, or equivalently, ${\rm Im}(\Phi)$ and ${\rm  Im}(\Phi')$;
\item
${\rm Coker}(\Phi)$  and ${\rm  Coker}(\Phi')$.
\qed
\end{enumerate}
\end{corollary}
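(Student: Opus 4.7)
The plan is to deduce Corollary~\ref{co-zzz} directly from Lemma~\ref{le-abc} by unpacking the definition of strong $\epsilon$-interleaving of persistent homomorphisms. The crucial observation is that conditions (a) and (b) of that definition are exactly the requirements that the source persistent modules and the target persistent modules of the two homomorphisms be strongly $\epsilon$-interleaved. Consequently, whenever Lemma~\ref{le-abc} produces a strong $\epsilon$-interleaving of a pair of persistent homomorphisms, the sources and targets of those homomorphisms automatically inherit strong $\epsilon$-interleavings that can be read off at no additional cost.

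Concretely, for part (i), I would apply Lemma~\ref{le-abc}(i) to obtain that $\Phi:\text{Ker}(\Phi)\longrightarrow 0$ and $\Phi':\text{Ker}(\Phi')\longrightarrow 0$ are strongly $\epsilon$-interleaved; the source persistent $R$-modules of these two homomorphisms are $\text{Ker}(\Phi)$ and $\text{Ker}(\Phi')$, which therefore are strongly $\epsilon$-interleaved. For part (ii), Lemma~\ref{le-abc}(ii) provides the strong $\epsilon$-interleaving of $\Phi/\text{Ker}:\mathcal{V}/\text{Ker}(\Phi)\longrightarrow \text{Im}(\Phi)$ and $\Phi'/\text{Ker}:\mathcal{V}'/\text{Ker}(\Phi')\longrightarrow \text{Im}(\Phi')$; its sources yield the strong $\epsilon$-interleaving of $\mathcal{V}/\text{Ker}(\Phi)$ and $\mathcal{V}'/\text{Ker}(\Phi')$, its targets yield that of $\text{Im}(\Phi)$ and $\text{Im}(\Phi')$, and the equivalence of the two formulations is the persistent first isomorphism theorem $\mathcal{V}/\text{Ker}(\Phi)\cong \text{Im}(\Phi)$ already recorded in the excerpt. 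For part (iii), I would apply Lemma~\ref{le-abc}(iii) to obtain the strong $\epsilon$-interleaving of $\Phi/\text{Im}:0\longrightarrow \text{Coker}(\Phi)$ and $\Phi'/\text{Im}:0\longrightarrow \text{Coker}(\Phi')$, and read off the interleaving of the targets $\text{Coker}(\Phi)$ and $\text{Coker}(\Phi')$.

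Because the actual diagram chasing, namely the verification that the maps $\alpha_t,\alpha'_t,\beta_t,\beta'_t$ descend to quotients by $\text{Ker}(\Phi_t)$ and $\text{Im}(\Phi_t)$, or restrict to $\text{Ker}(\Phi_t)$, and that all four relevant squares continue to commute, has already been absorbed into the proof of Lemma~\ref{le-abc}, there is essentially no remaining obstacle. The only bookkeeping to record is, in each of the three cases, which endpoint of the interleaved persistent homomorphism produced by Lemma~\ref{le-abc} is the object named in the corollary, and this is immediate from the shapes $\text{Ker}(\Phi)\longrightarrow 0$, $\mathcal{V}/\text{Ker}(\Phi)\longrightarrow \text{Im}(\Phi)$, and $0\longrightarrow \text{Coker}(\Phi)$ appearing in Lemma~\ref{le-abc}.
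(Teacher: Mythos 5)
Your proposal is correct and follows the same route as the paper, which simply notes that Corollary~\ref{co-zzz}~(i), (ii) and (iii) follow from Lemma~\ref{le-abc}~(i), (ii) and (iii) respectively; your observation that conditions (a) and (b) in the definition of strongly $\epsilon$-interleaved persistent homomorphisms let one read off the interleaving of sources and targets is exactly the implicit step the paper relies on, and the use of $\mathcal{V}/\mathrm{Ker}(\Phi)\cong \mathrm{Im}(\Phi)$ for the equivalence in (ii) matches the paper as well.
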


Let   $R$ be  a field $\mathbb{F}$.   Choose a  persistent  basis
$b(\text{Ker}(\Phi))
$ for $\text{Ker}(\Phi)$.  By  \cite[Theorem~1.1]{decomp},   we may extend $b(\text{Ker}(\Phi))$  to a  persistent  basis $b(\text{Ker}(\Phi))\sqcup b(\Phi)$ of $\mathcal{V}$,  where $b(\Phi)$ is a  persistent  basis  for $\mathcal{V}/\text{Ker}(\Phi)$.  The  persistent  linear map $\Phi$ sends $b(\Phi)$ bijectively to a  persistent  basis $\Phi(b(\Phi))$ of  $\text{Im}(\Phi)$.  By  \cite[Theorem~1.1]{decomp},  we may extend $\Phi(b(\Phi))$ to a  persistent  basis  $\Phi(b(\Phi))\sqcup b(\text{Coker}(\Phi))$ of $\mathcal{U}$,  where $b(\text{Coker}(\Phi))$ is a  persistent  basis for $\text{Coker}(\Phi)$.
 Similarly,  we have  a persistent  basis $b(\text{Ker}(\Phi'))$ for $\text{Ker}(\Phi')$,  a  persistent basis  $b(\Phi')$ for $\mathcal{V}'/\text{Ker}(\Phi')$,  a  persistent basis $\Phi'(b(\Phi'))$ for $\text{Im}(\Phi')$, and a  persistent basis $b(\text{Coker}(\Phi'))$ for $\text{Coker}(\Phi')$.  By taking the birth-times and the death-times of the elements in the  persistent  bases, we have the corresponding  persistent  diagrams.

 \begin{lemma}\label{le-789}
Let $\mathbb{F}$ be a field.  Suppose the  persistent  $\mathbb{F}$-linear maps $\Phi: \mathcal{V}\longrightarrow\mathcal{U}$ and $\Phi': \mathcal{V}'\longrightarrow \mathcal{U}'$ are strongly $\epsilon$-interleaved.  Then
\begin{enumerate}[(i).]
\item
$d_B^\infty(D({\rm Ker}(\Phi)), D({\rm  Ker}(\Phi')))\leq \epsilon$;
\item
  $d_B^\infty(D({\rm Im}(\Phi)),  D({\rm  Im}(\Phi')))\leq \epsilon$;

\item
$d_B^\infty(D({\rm  Coker}(\Phi)), D({\rm  Coker}(\Phi')))\leq \epsilon$.
\end{enumerate}
 \end{lemma}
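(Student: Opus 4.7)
The plan is to reduce Lemma~\ref{le-789} to a direct application of the fundamental stability inequality (\ref{eq-888888}) borrowed from \cite[Theorem~4.4]{pd2}, using Corollary~\ref{co-zzz} as the bridge. The inequality (\ref{eq-888888}) says precisely that the $L^\infty$-bottleneck distance between the persistent diagrams of two strongly $\epsilon$-interleaved persistent $\mathbb{F}$-vector spaces is bounded by $\epsilon$. Since Corollary~\ref{co-zzz} has already promoted the interleaving between the persistent homomorphisms $\Phi$ and $\Phi'$ to interleavings between the associated kernels, quotients, images, and cokernels (viewed merely as persistent vector spaces), each of the three assertions becomes a single invocation of (\ref{eq-888888}).

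Concretely, for part (i) I would cite Corollary~\ref{co-zzz}~(i) to conclude that $\mathrm{Ker}(\Phi)$ and $\mathrm{Ker}(\Phi')$ are strongly $\epsilon$-interleaved persistent $\mathbb{F}$-vector spaces, so that (\ref{eq-888888}) applied with $\mathcal{V},\mathcal{V}'$ replaced by $\mathrm{Ker}(\Phi),\mathrm{Ker}(\Phi')$ yields
\begin{equation*}
d_B^\infty\bigl(D(\mathrm{Ker}(\Phi)),D(\mathrm{Ker}(\Phi'))\bigr)\leq \epsilon.
\end{equation*}
For part (ii) the same argument applied to the interleaved pair $\mathcal{V}/\mathrm{Ker}(\Phi)$ and $\mathcal{V}'/\mathrm{Ker}(\Phi')$ in Corollary~\ref{co-zzz}~(ii) gives the bound for the quotient diagrams; the equivalent statement for $\mathrm{Im}(\Phi)$ and $\mathrm{Im}(\Phi')$ then follows because the persistent isomorphism $\mathcal{V}/\mathrm{Ker}(\Phi)\cong\mathrm{Im}(\Phi)$ induced by $\Phi$ sends the persistent basis $b(\Phi)$ bijectively (and compatibly with birth-times and death-times) to $\Phi(b(\Phi))$, so the two persistent diagrams are literally equal. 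Part (iii) is handled identically using Corollary~\ref{co-zzz}~(iii) with $\mathrm{Coker}(\Phi),\mathrm{Coker}(\Phi')$.

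The only mildly delicate point, rather than a genuine obstacle, is checking that the persistent bases $b(\mathrm{Ker}(\Phi))$, $b(\Phi)$, $\Phi(b(\Phi))$, $b(\mathrm{Coker}(\Phi))$ described immediately before the lemma really produce the persistent diagrams used on the left-hand sides of (i)--(iii), and in particular that the $\mathcal{V}/\mathrm{Ker}(\Phi)\cong\mathrm{Im}(\Phi)$ identification in (ii) is an isomorphism of persistent vector spaces (not merely a level-wise one). This is forced, however, by the commutativity of the persistence squares for $\Phi$ that is already part of the definition of a persistent homomorphism, together with the observation that the $\epsilon$-interleaving maps $\alpha_t/\mathrm{Ker}$ and $\beta_t$ constructed in the proof of Lemma~\ref{le-abc} intertwine this isomorphism with its primed counterpart. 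Once this bookkeeping is in place, the three inequalities are one-line consequences of (\ref{eq-888888}), and the proof is complete.
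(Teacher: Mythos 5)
Your proposal is correct and follows essentially the same route as the paper: the paper's proof is exactly the one-line application of \cite[Theorem~4.4]{pd2} (i.e. the inequality (\ref{eq-888888})) to the three strongly $\epsilon$-interleaved pairs produced by Corollary~\ref{co-zzz}~(i), (ii), (iii). Your additional remarks on the persistent bases and the identification $\mathcal{V}/\mathrm{Ker}(\Phi)\cong\mathrm{Im}(\Phi)$ are consistent with the discussion the paper places just before the lemma, so nothing is missing.
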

\begin{proof}
By applying \cite[Theorem~4.4]{pd2} to Corollary~\ref{co-zzz}~(i), (ii) and (iii) respectively,  we obtain (i), (ii) and (iii) in Lemma~\ref{le-789}.
\end{proof}

 Define the  persistent  diagram of a  persistent  linear map $\Phi$  as the triple
 \begin{eqnarray*}
 D(\Phi)=(D(\text{Ker}(\Phi)), D(\text{Im}(\Phi)),  D(\text{Coker}(\Phi))).
 \end{eqnarray*}
 Define the $L^\infty$-bottleneck distance between two  persistent  linear maps $\Phi$ and $\Phi'$ as   \begin{eqnarray*}
 d_B^\infty(D(\Phi), D(\Phi'))&=& \max\{d_B^\infty(D(\text{Ker}(\Phi)), D(\text{Ker}(\Phi'))),  \\ &&
 d_B^\infty(D(\text{Im}(\Phi)),  D(\text{Im}(\Phi'))),\\ &&
  d_B^\infty(D(\text{Coker}(\Phi)), D(\text{Coker}(\Phi'))) \}.
 \end{eqnarray*}
 The next  corollary  follows from Lemma~\ref{le-789} directly.
 \begin{corollary}\label{pr-bn}
 Suppose two  persistent  linear maps $\Phi$ and $\Phi'$ are strongly $\epsilon$-interleaved.  Then
 \begin{eqnarray*}
  d_B^\infty(D(\Phi), D(\Phi'))\leq \epsilon.     \qed
  \end{eqnarray*}
 \end{corollary}

 \begin{proof}[Proof of  Theorem~\ref{main2-x}]
(i).   Let $\Phi_f$ and $\Phi_g$ be the  persistent  linear maps.   Suppose $||f-g||_\infty\leq \epsilon$.      Then  Definition~\ref{def-998} (a),  (b) and   (c)  hold.    Thus   $\Phi_f$ and $\Phi_g$ are strongly $\epsilon$-interleaved.  By Corollary~\ref{pr-bn}, we have (i).

(ii). Let $\Phi'_{f'}$ and $\Phi'_{g'}$ be the  persistent  linear maps.    Suppose $||f'-g'||_\infty\leq \epsilon$.  Similar with (i),    $\Phi'_{f'}$ and $\Phi'_{g'}$ are strongly $\epsilon$-interleaved.  By Corollary~\ref{pr-bn}, we have (ii).
 \end{proof}

\subsection{The  stability of  the  Mayer-Vietoris  sequences  }

Let  $\mathcal{H}$   and  $\mathcal{H}'$  be  two  hypergraphs.   Let  $\{\mathcal{H}_t\}_{t\in\mathbb{R}}$   be  a
 filtration  of  $\mathcal{H}$  and  let   $\{\mathcal{H}'_t\}_{t\in\mathbb{R}}$   be  a
 filtration  of  $\mathcal{H}'$   such  that   for  each  $t\in \mathbb{R}$,   $\mathcal{H}_t$  and  $\mathcal{H}'_t$  satisfy the  hypothesis  (P)  in  Proposition~\ref{pr-3.8897}.    Then  we  have  the  following  commutative  diagram  of  persistent  homology
{\small\begin{eqnarray*}
\xymatrix{
 \cdots\ar[r]^-{\gamma_{\delta,n+1}}
 & H_n(\delta(\mathcal{H}_t)\cap\delta(\mathcal{H}'_t))\ar[r]^-{\alpha_{\delta,n}}\ar[d]
 & H_n(\delta(\mathcal{H}_t))\oplus  H_n(\delta(\mathcal{H}'_t)) \ar[r]^-{\beta_{\delta,n}}\ar[d]
 &
&
& \\
 \cdots\ar[r]^-{\gamma_{n+1}}
 & H_n(\mathcal{H}_t\cap\mathcal{H}'_t)\ar[r]^-{\alpha_{n}}\ar[d]
 & H_n(\mathcal{H}_t)\oplus  H_n(\mathcal{H}'_t) \ar[r]^-{\beta_{n}}\ar[d]
 &
&
&\\
\cdots\ar[r]^-{\gamma_{\Delta,n+1}}
 & H_n(\Delta(\mathcal{H}_t) \cap\Delta(\mathcal{H}' _t))\ar[r]^-{\alpha_{\Delta,n}}
 & H_n(\Delta(\mathcal{H}_t) )\oplus  H_n(\Delta(\mathcal{H}'_t) ) \ar[r]^-{\beta_{\Delta,n}}
 &
&
&
}\\
~~~\\
\xymatrix{
  &
 & \ar[r]^-{\beta_{\delta,n}}
 &  H_n(\delta(\mathcal{H}_t)\cup\delta(\mathcal{H}'_t)) \ar[r]^-{\gamma_{\delta,n}}\ar[d]
&   H_{n-1}(\delta(\mathcal{H}_t)\cap\delta(\mathcal{H}'_t))\ar[r]^-{\alpha_{\delta,n-1}}\ar[d]
& \cdots\\
 &
 & \ar[r]^-{\beta_{n}}
 &  H_n({\mathcal{H}_t}\cup\mathcal{H}'_t) \ar[r]^-{\gamma_{n}}\ar[d]
&   H_{n-1}(\mathcal{H}_t\cap\mathcal{H}'_t)\ar[r]^-{\alpha_{n-1}}\ar[d]
& \cdots\\
 &
 &  \ar[r]^-{\beta_{\Delta,n}}
 &  H_n(\Delta(\mathcal{H}_t) \cup\Delta(\mathcal{H}' _t)) \ar[r]^-{\gamma_{\Delta,n}}
&   H_{n-1}(\Delta(\mathcal{H}_t) \cap\Delta(\mathcal{H}'_t)  )\ar[r]^-{\alpha_{\Delta,n-1}}
& \cdots
}
\end{eqnarray*}}where  each  row  is  a  long  exact  sequence of  persistent  modules   and each vertical map is a homomorphism   of  persistent  modules   induced by the canonical inclusions  of  hypergraphs.
 We  have
 \begin{eqnarray*}
 &{\rm   Ker} (\alpha_{n})= {\rm  Im} (\gamma_{n+1}),~~~
 {\rm   Ker} (\beta_{n})= {\rm  Im} (\alpha_{n}), ~~~
 {\rm   Ker} (\gamma_{n})= {\rm  Im} (\beta_{n}),\\
 &{\rm   Coker} (\alpha_{n})\cong  {\rm  Im} (\beta_{n}),~~~
  {\rm   Coker} (\beta_{n})\cong  {\rm  Im} (\gamma_{n}),~~~
   {\rm   Coker} (\gamma_{n})\cong  {\rm  Im} (\alpha_{n-1}).
 \end{eqnarray*}
 The  persistent  diagram   of   the   persistent  linear  map   $\alpha_n$  is
 \begin{eqnarray}
 D(\alpha_n)
 &=&(D({\rm  Ker}(\alpha_n)),  D({\rm  Im}(\alpha_n)),  D({\rm   Coker}(\alpha_n)))\nonumber\\
 &=&(D({\rm   Coker}(\beta_{n+1})),  D({\rm   Ker}(\beta_n)),  D({\rm   Im}(\beta_n)))\nonumber\\
 &=&(D({\rm   Im}(\gamma_{n+1})),  D({\rm   Coker}(\gamma_{n+1})),  D({\rm   Ker}(\gamma_n))).
 \label{eq-alpha}
 \end{eqnarray}
 Similarly,  we  can give the explicit expressions for  $ D(\beta_n)$  and $ D(\gamma_n)$.
     Let  $f,g:  \mathcal{H}\longrightarrow   \mathbb{R}$  and   $f',g':  \mathcal{H}'\longrightarrow   \mathbb{R}$   such  that  for each  $t\in\mathbb{R}$,
both  the  pair  $(\mathcal{H}^f_t,{\mathcal{H}'}^{f'}_t)$   and  the  pair  $(\mathcal{H}^g_t,{\mathcal{H}'}^{g'}_t)$
 satisfy the  hypothesis  (P)  in  Proposition~\ref{pr-3.8897}.
 Denote   the   persistent  linear  maps  $\alpha_n$,  $\beta_n$  and  $\gamma_n$  induced  by  by $f$  and  $g$  as  $\alpha_{n,f,g}$,  $\beta_{n,f,g}$   and  $\gamma_{n,f,g}$     respectively.       The  maximum    among  the  three  distances
 \begin{eqnarray*}
 d_B^\infty  (\alpha_{n,f,g}, \alpha_{\delta,n,f',g'}), ~~~
    d_B^\infty  (\beta_{n,f,g}, \beta_{\delta,n,f',g'}), ~~~
     d_B^\infty  (\gamma_{n,f,g}, \gamma_{n,f',g'})
\end{eqnarray*}
equals to the  maximum    among   the  five  distances
\begin{eqnarray*}
 &&d_B^\infty ({\rm {Ker}}(\alpha_{n,f,g}), {\rm {Ker}}(\alpha_{n,f',g'})),  ~~~
  d_B^\infty ({\rm {Ker}}(\alpha_{n-1,f,g}), {\rm {Ker}}(\alpha_{n-1,f',g'})),  \\
&& d_B^\infty ({\rm {Im}}(\alpha_{n,f,g}), {\rm {Im}}(\alpha_{n,f',g'})), ~~~
 d_B^\infty ({\rm {Im}}(\alpha_{n-1,f,g}), {\rm {Im}}(\alpha_{n-1,f',g'})),  \\
 &&  d_B^\infty ({\rm {Coker}}(\alpha_{n,f,g}), {\rm {Coker}}(\alpha_{n,f',g'})),
        \end{eqnarray*}
        which  equals to the  maximum  among  the  five  distances
        \begin{eqnarray*}
 &&d_B^\infty ({\rm {Ker}}(\beta_{n,f,g}), {\rm {Ker}}(\beta_{n,f',g'})),  ~~~
  d_B^\infty ({\rm {Ker}}(\beta_{n-1,f,g}), {\rm {Ker}}(\beta_{n-1,f',g'})),  \\
&& d_B^\infty ({\rm {Coker}}(\beta_{n,f,g}), {\rm {Coker}}(\beta_{n,f',g'})), ~~~
 d_B^\infty ({\rm {Coker}}(\beta_{n+1,f,g}), {\rm {Coker}}(\beta_{n+1,f',g'})),  \\
 &&  d_B^\infty ({\rm {Im}}(\beta_{n,f,g}), {\rm {Im}}(\beta_{n,f',g'}))
        \end{eqnarray*}
        as  well  as   the  maximum  among  the  five  distances
           \begin{eqnarray*}
 &&d_B^\infty ({\rm {Im}}(\gamma_{n,f,g}), {\rm {Im}}(\gamma_{n,f',g'})),  ~~~
  d_B^\infty ({\rm {Im}}(\gamma_{n+1,f,g}), {\rm {Im}}(\gamma_{n+1,f',g'})),  \\
&&d_B^\infty ({\rm {Coker}}(\gamma_{n,f,g}), {\rm {Coker}}(\gamma_{n,f',g'})),  ~~~
  d_B^\infty ({\rm {Coker}}(\gamma_{n+1,f,g}), {\rm {Coker}}(\gamma_{n+1,f',g'})),  \\
 &&  d_B^\infty ({\rm {Ker}}(\gamma_{n,f,g}), {\rm {Ker}}(\gamma_{n,f',g'})).
        \end{eqnarray*}
Analogous   assertions  hold  for   the  persistent  linear maps  $\alpha_{\delta,*}$,  $\beta_{\delta,*}$  and $\gamma_{\delta,*}$
 as   well  as   the  persistent  linear  maps  $\alpha_{\Delta,*}$,  $\beta_{\Delta,*}$  and $\gamma_{\Delta,*}$.

 Choose  a  persistent  module  in the last  diagram.  Denote the  persistent module  induced  by   $f$  and  $g$  as  $M_{f,g}$  and denote the  persistent module  induced  by   $f'$  and  $g'$  as  $M_{f',g'}$.
  Choose  an  arrow  in the last  diagram.   Denote the arrow  induced  by $f$  and  $g$  as  $\Phi_{f,g}$    and    denote the  arrow   induced  by
$f'$  and  $g'$  as  $\Phi_{f',g'}$.

 \begin{theorem}\label{pr-8yrjtjofw}
 If  $||f-f'||_\infty \leq  \epsilon$  and  $||g-g'||_\infty \leq  \epsilon$,  then    $d_B^\infty(D(M_{f,g}), D(M_{f',g'}))\leq \epsilon$   and   $d_B^\infty(D(\Phi_{f,g}), D(\Phi_{f',g'}))\leq \epsilon$.
 \end{theorem}
 \begin{proof}
 Suppose $||f-f'||_\infty \leq  \epsilon$  and  $||g-g'||_\infty \leq  \epsilon$.   Then    $M_{f,g}$ and $M_{f',g'}$ are strongly $\epsilon$-interleaved.    Thus  $d_B^\infty(D(M_{f,g}), D(M_{f',g'}))\leq \epsilon$.   Moreover,  $\Phi_{f,g}$ and $\Phi_{f',g'}$ are strongly $\epsilon$-interleaved.    Thus   $d_B^\infty(D(\Phi_{f,g}), D(\Phi_{f',g'}))\leq \epsilon$.
 \end{proof}

\subsection{The  stability of      the  K\"unneth-type  formulae}
Let  $\mathcal{H}$   and  $\mathcal{H}'$  be  two  hypergraphs.   Let  $\{\mathcal{H}_t\}_{t\in\mathbb{R}}$   be  a
 filtration  of  $\mathcal{H}$  and  let   $\{\mathcal{H}'_t\}_{t\in\mathbb{R}}$   be  a
 filtration  of  $\mathcal{H}'$.   Let $R$  be a principal ideal domain  with  unit $1$.
  By   Proposition~\ref{pr-3.3.996},  we  have  a  commutative diagram  of  persistent   modules
{\small\begin{eqnarray*}
\xymatrix{
  \bigoplus_{p+q+1=n} H_{p+1}(\delta(\mathcal{H}_t))\otimes H_{q+1}(\delta(\mathcal{H}'_t))\ar[r]\ar[d]
&H_{n+1}(\delta(\mathcal{H}_t) *\delta(\mathcal{H}'_t)) \ar[r]\ar[d]
& &  \\
  \bigoplus_{p+q+1=n} H_{p+1}(\mathcal{H}_t)\otimes H_{q+1}(\mathcal{H}'_t)\ar[r]\ar[d]
&H_{n+1}(\mathcal{H}_t *\mathcal{H}'_t)\ar[r]\ar[d]
 &  & \\
  \bigoplus_{p+q+1=n} H_{p+1}(\Delta(\mathcal{H}_t))\otimes H_{q+1}(\Delta(\mathcal{H}'_t))\ar[r]
&H_{n+1}(\Delta(\mathcal{H}_t) *\Delta(\mathcal{H}'_t))\ar[r]
 &  &
}\\
~~~\\
\xymatrix{
 &
&  \ar[r]
&\bigoplus_{p+q+1=n} {\rm  Tor}_R(H_{p+1}(\delta(\mathcal{H}_t)), H_{q}(\delta(\mathcal{H}'_t))) \ar[d]   \\
 &
& \ar[r]
 &\bigoplus_{p+q+1=n} {\rm  Tor}_R(H_{p+1}(\mathcal{H}_t), H_{q}(\mathcal{H}'_t)) \ar[d]  \\
 &
 & \ar[r]
 &\bigoplus_{p+q+1=n} {\rm  Tor}_R(H_{p+1}(\Delta(\mathcal{H}_t)), H_{q}(\Delta(\mathcal{H}'_t)))
}
\end{eqnarray*}}such that each row  is  a  short  exact sequence  of  persistent  modules  and  each  vertical  map  is  a  homomorphism    of  persistent  modules     induced  by    canonical  inclusions  of  hypergraphs.
Let  $R=\mathbb{F}$  be  a  field.  Then  all the  torsions   in  the  last  diagram   are  trivial.

Let  $D=\{(b_i,d_i) \mid  i=1,\ldots,l\}\cup  \{(t,t)\mid  t\in \mathbb{R}\}$  and     $D'=\{(b'_j,d'_j) \mid j=1,\ldots, m\}\cup  \{(t,t)\mid  t\in \mathbb{R}\}$,   where  $  -\infty  \leq   b_i<d_i\leq  +\infty$   for each $1\leq    i\leq   l$  and   $  -\infty  \leq   b'_j<d'_j\leq  +\infty$  for each  $1\leq   j\leq   m$,      be   two  persistent  diagrams.   We  define  the   product  of  $D$  and  $D'$   as
\begin{eqnarray*}
D    D'=  \{(\max (b_i,b'_j),  \min  (d_i,d'_j))\mid  1\leq  i\leq   l,   1\leq  j\leq  m   \}  \cup  \{(t,t)\mid  t\in \mathbb{R}\}.
\end{eqnarray*}
The  multiplicity of $(\max (b_i,b'_j),  \min  (d_i,d'_j))$ is the  product  of  the  multiplicity of $  (b_i,d_i)$  in   $D$    and  the
multiplicity  of    $(b'_j,d'_j)$  in  $D'$.
We  define   the  sum  of   $D$  and  $D'$  as
 \begin{eqnarray*}
D   +  D'=  \{(b_i,d_i) \mid  1\leq  i \leq   l\} \cup  \{(b'_j,d'_j) \mid   1\leq   j\leq   m\} \cup  \{(t,t)\mid  t\in \mathbb{R}\}.
\end{eqnarray*}
For  any  $(b,d)\in  D+D'$,     if   $(b,d)=(b_i,d_i)=(b'_j,d'_j)$  for  some  $1\leq  i\leq  l$  and  some
 $1\leq  j\leq  m$,  then
the  multiplicity   of  $(b,  d)$     is   the  sum  of  the  multiplicity  of   $(b_i,  d_i)$   in  $D$
  and   the   multiplicity  of    $(b'_j,  d'_j)$   in  $ D'$.

\begin{proposition}
We  have
\begin{eqnarray}
 D(\{H_{n+1}(\delta(\mathcal{H}_t) *\delta(\mathcal{H}'_t))\}_{t\in\mathbb{R}})&=& \sum_{p+q+1=n}  D(\{ H_{p+1}(\delta(\mathcal{H}_t))\}_{t\in\mathbb{R}})  \nonumber  \\
 &&  ~~~~~~~~~~D(\{ H_{q+1}(\delta(\mathcal{H}'_t))\}_{t\in\mathbb{R}}), \label{eq-(1)knth}
\\
 D(\{H_{n+1}(\mathcal{H}_t *\mathcal{H}'_t)\}_{t\in\mathbb{R}})&=& \sum_{p+q+1=n}  D(\{ H_{p+1}(\mathcal{H}_t)\}_{t\in\mathbb{R}})   \nonumber \\
 &&  ~~~~~~~~~~D(\{ H_{q+1}(\mathcal{H}'_t)\}_{t\in\mathbb{R}}), \label{eq-(2)knth}
\\
D(\{H_{n+1}(\Delta(\mathcal{H}_t) *\Delta(\mathcal{H}'_t))\}_{t\in\mathbb{R}})&=& \sum_{p+q+1=n}  D(\{ H_{p+1}(\Delta(\mathcal{H}_t))\}_{t\in\mathbb{R}})    \nonumber\\
 &&  ~~~~~~~~~~D(\{ H_{q+1}(\Delta(\mathcal{H}'_t))\}_{t\in\mathbb{R}}).    \label{eq-(3)knth}
\end{eqnarray}
\end{proposition}

\begin{proof}
Take  the  persistent  diagram  of  each  persistent   module  in  the  last  diagram.   The  equations  (\ref{eq-(1)knth}),  (\ref{eq-(2)knth})  and  (\ref{eq-(3)knth})  follow  from   the  first row,  the  second row  and  the  third  row  in  the  last  diagram  respectively.
\end{proof}

 Let  $f,g:  \mathcal{H}\longrightarrow   \mathbb{R}$  and
  let   $f',g':  \mathcal{H}'\longrightarrow   \mathbb{R}$.  Choose  a  persistent  module  in the last  diagram.  Denote the  persistent module  induced  by   $f$  and  $g$  as  $M_{f,g}$  and denote the  persistent module  induced  by   $f'$  and  $g'$  as  $M_{f',g'}$.
   Choose  an  arrow  in the last  diagram.  Denote the arrow  induced  by $f$  and  $g$  by  $\Phi_{f,g}$    and    denote the  arrow   induced  by
$f'$  and  $g'$  by  $\Phi_{f',g'}$.
 \begin{theorem}
 If  $||f-f'||_\infty \leq  \epsilon$  and  $||g-g'||_\infty \leq  \epsilon$,  then     $d_B^\infty(D(M_{f,g}), D(M_{f',g'}))\leq \epsilon$   and    $d_B^\infty(D(\Phi_{f,g}), D(\Phi_{f',g'}))\leq \epsilon$.
 \end{theorem}

 \begin{proof}
 The  proof  is  an  analog of  Theorem~\ref{pr-8yrjtjofw}.
 \end{proof}

 \section{The  constancy  of  persistent  Betti  numbers  of  hypergraphs}

  Given a filtration  $\{\mathcal{H}_t\}_{t\in\mathbb{R}}$  of  a   hypergraph  $\mathcal{H}$,  consider  the  persistent  embedded homology
 \begin{eqnarray}
 H_n^{t,s}(\{\mathcal{H}_t\}_{t\in\mathbb{R}}) &=& {\rm  Im}  ((\iota_t^s)_*)\nonumber\\
 &=&\frac{{\rm  Ker}(\partial_n(\mathcal{H}_t))}{{\rm  Ker}(\partial_n(\mathcal{H}_t))  \cap  {\rm  Im} (\partial_{n+1}(\mathcal{H}_s))},
 \label{eq-phaaa}
 \end{eqnarray}
 where  $t\leq  s$,   $\partial_n(\mathcal{H}_t)$  is  the restriction  of   the  boundary  map  $\partial_n$   of   $\Delta\mathcal{H}$   to  ${\rm  Inf}_n(\mathcal{H}_t)$,     $\iota_t^s$  is the canonical  inclusion  of  $\mathcal{H}_t$  into  $\mathcal{H}_s$  and  $(\iota_t^s)_*$  is  the  induced  homomorphism  of  the  embedded  homology  groups.    Similarly,  consider the  persistent  homology
 \begin{eqnarray}
 H_n^{t,s}(\{\Delta(\mathcal{H}_t)\}_{t\in\mathbb{R}}) &=&{\rm  Im}  (((\iota_\Delta)_t^s)_*)\nonumber\\
 &=&\dfrac{{\rm  Ker}(\partial_n(\Delta(\mathcal{H}_t)))}{{\rm  Ker}(\partial_n(\Delta(\mathcal{H}_t)))  \cap  {\rm  Im} (\partial_{n+1}(\Delta(\mathcal{H}_s)))},  \label{eq-phbbb}\\
  H_n^{t,s}(\{\delta(\mathcal{H}_t)\}_{t\in\mathbb{R}}) &=&{\rm  Im}  (((\iota_\delta)_t^s)_*)  \nonumber\\
& =&\dfrac{{\rm  Ker}(\partial_n(\delta(\mathcal{H}_t)))}{{\rm  Ker}(\partial_n(\delta(\mathcal{H}_t)))  \cap  {\rm  Im} (\partial_{n+1}(\delta(\mathcal{H}_s)))},  \label{eq-phccc}
 \end{eqnarray}
 where
 $\partial_n(\Delta(\mathcal{H}_t))$   (resp.  $\partial_n(\delta(\mathcal{H}_t))$)  is  the restriction  of   the  boundary  map  $\partial_n$   of   $\Delta\mathcal{H}$   (resp.    $\delta\mathcal{H}$)  to  $C_n(\Delta(\mathcal{H}_t);R)$  (resp.  $C_n(\delta(\mathcal{H}_t);R)$)  and
   $(\iota_\Delta)_t^s$  (resp.  $(\iota_\delta)_t^s$)   is   the  canonical  inclusion  of  $\Delta(\mathcal{H}_t)$   (resp.  $\delta(\mathcal{H}_t)$)  into   $\Delta(\mathcal{H}_s)$ (resp.   $\delta(\mathcal{H}_s)$).      Take the coefficients  of     the  persistent  homology  (\ref{eq-phaaa}),   (\ref{eq-phbbb})  and
      (\ref{eq-phccc})   in a field  $\mathbb{F}$.   The    persistent  Betti  numbers  of   the   filtrations  $\{\mathcal{H}_t\}_{t\in\mathbb{R}}$,   $\{\Delta(\mathcal{H}_t)\}_{t\in\mathbb{R}}$   and  $\{\delta(\mathcal{H}_t)\}_{t\in\mathbb{R}}$
       are  respectively
 \begin{eqnarray*}
 \beta_n^{t,s}  & =&  \dim_\mathbb{F}  H_n^{t,s}(\{\mathcal{H}_t\}_{t\in\mathbb{R}}),  \\
  \beta_{\Delta,n}^{t,s}  & =&  \dim_\mathbb{F}   H_n^{t,s}(\{\Delta(\mathcal{H}_t)\}_{t\in\mathbb{R}}),  \\
  \beta_{\delta,n}^{t,s}  & =&  \dim_\mathbb{F}   H_n^{t,s}(\{\delta(\mathcal{H}_t)\}_{t\in\mathbb{R}}).
 \end{eqnarray*}
    With the help of   \cite[Section~2.3]{hiraoka},
    $ \beta_n^{t,s}$   is  the  sum  of  the  multiplicities   $m_{b,d}$  of  the  points   $(b,d)$     in the  persistent diagram      $D(\{H_n(\mathcal{H}_t)\}_{t\in\mathbb{R}})$  such that $b\leq t$  and    $d>s$,
     $ \beta_{\Delta, n}^{t,s}$   is  the  sum  of  the  multiplicities    $m_{b,d}$  of  the  points   $(b,d)$     in the  persistent diagram      $D(\{H_n(\Delta(\mathcal{H}_t))\}_{t\in\mathbb{R}})$   such that $b\leq t$  and    $d>s$,      and
         $ \beta_{\delta, n}^{t,s}   $   is  the  sum  of  the  multiplicities   $m_{b,d}$  of  the  points   $(b,d)$     in the  persistent diagram      $D(\{H_n(\delta(\mathcal{H}_t))\}_{t\in\mathbb{R}})$   such that $b\leq t$  and    $d>s$.
    \begin{theorem}\label{th-pbnb}
    Let  $\mathcal{H}$  be  a  hypergraph  and  let  $f:  \mathcal{H}\longrightarrow \mathbb{R}$.          For  any   $t\leq  s$,
    \begin{enumerate}[(1).]
    \item
if    $\mathcal{H}_t^f$   and   $\mathcal{H}_s^f$
 have all the vertices as $0$-hyperedges and  have the same   strong  simple-homotopy type,    then
 $\beta_n^{t,s}=  \beta_n(\mathcal{H}_t^f)=\beta_n(\mathcal{H}_s^f)$,  where    $\beta_n$  is  the  Betti  number   of  the embedded homology  group;
  \item
  if  $\mathcal{H}_t^f$   and   $\mathcal{H}_s^f$    have the same  weak simple-homotopy type,      then
  $\beta_{\Delta,n}^{t,s}  = \beta_n(\Delta(\mathcal{H}_t^f))=\beta_n(\Delta(\mathcal{H}_s^f))$;
    \item
    if   $\mathcal{H}_t^f$   and   $\mathcal{H}_s^f$ have the same  weak simple-homotopy type via a sequence of weak simplicial collapses and  weak simplicial expansions   such  that   in each  weak elementary  simplicial collapse or weak elementary  simplicial expansion,  $\eta$  is a free face of $\xi$ and one of (a), (b)  and (c) in Lemma~\ref{pr-2.8a182}  is satisfied,       then
  $\beta_{\delta,n}^{t,s}  = \beta_n(\delta(\mathcal{H}_t^f))=\beta_n(\delta(\mathcal{H}_s^f))$.
    \end{enumerate}
    \end{theorem}

  \begin{proof}
For  any  $t\leq  s$,  we  have  $\mathcal{H}_t^f\subseteq   \mathcal{H}_s^f$,   $1\leq  i\leq  k$.

(1).
By  our assumption,  there  is  a  sequence   $\mathcal{H}_s^f=\mathcal{H}^0\to  \mathcal{H}^1\to \cdots\to \mathcal{H}^n=\mathcal{H}_t^f$  of  hypergraphs  such  that    each  arrow  is  a  strong  elementary  simplicial  collapse   and   for  each  $0\leq  j\leq n$,  all  the  vertices of  $\mathcal{H}^j$ are  $0$-hyperedges.   By  Lemma~\ref{pr-2.3.main},
for  each  $1\leq  j\leq  n$,
the  canonical  inclusion  of  $\mathcal{H}^{j}$  into  $\mathcal{H}^{j-1}$  induces  an  isomorphism  from  $H_*(\mathcal{H}^{j})$  to  $H_*(\mathcal{H}^{j-1})$.    Thus    the  inclusion of $\mathcal{H}_t^f$  into $\mathcal{H}_s^f$  induces an isomorphism  from  $H_*(\mathcal{H}_t^f)$ to   $H_*(\mathcal{H}_s^f)$.    Therefore,   $\beta_n^{t,s}=  \beta_n(\mathcal{H}_t^f)=\beta_n(\mathcal{H}_s^f)$.

(2).  By  our assumption,  there  is  a  sequence   $\mathcal{H}_s^f=\mathcal{H}^0\to  \mathcal{H}^1\to \cdots\to \mathcal{H}^n=\mathcal{H}_t^f$  of  hypergraphs  such  that    each  arrow  is  a  weak  elementary  simplicial  collapse.   By   Lemma~\ref{pr-x218},   for  each  $1\leq  j\leq  n$,
the  canonical  inclusion  of  $\mathcal{H}^{j}$  into  $\mathcal{H}^{j-1}$  induces  an  isomorphism  from  $H_*(\Delta\mathcal{H}^{j})$  to  $H_*(\Delta\mathcal{H}^{j-1})$.   Thus     the  inclusion of $\mathcal{H}_t^f$  into $\mathcal{H}_s^f$  induces an isomorphism  from  $H_*(\Delta(\mathcal{H}_t^f))$ to   $H_*(\Delta(\mathcal{H}_s^f))$.    Therefore,  $\beta_{\Delta,n}^{t,s}  = \beta_n(\Delta(\mathcal{H}_t^f))=\beta_n(\Delta(\mathcal{H}_s^f))$.

(3).   By  our assumption,  there  is  a  sequence   $\mathcal{H}_s^f=\mathcal{H}^0\to  \mathcal{H}^1\to \cdots\to \mathcal{H}^n=\mathcal{H}_t^f$  of  hypergraphs  such  that    each  arrow  is  a  weak  elementary  simplicial  collapse  satisfying  the hypothesis in  (3).
 By   Lemma~\ref{pr-2.8a182},    for  each  $1\leq  j\leq  n$,
the  canonical  inclusion  of  $\mathcal{H}^{j}$  into  $\mathcal{H}^{j-1}$  induces  an  isomorphism  from  $H_*(\delta\mathcal{H}^{j})$  to  $H_*(\delta\mathcal{H}^{j-1})$.   Thus   the  inclusion of $\mathcal{H}_t^f$  into $\mathcal{H}_s^f$  induces an isomorphism     from   $H_*(\delta(\mathcal{H}_t^f))$ to   $H_*(\delta(\mathcal{H}_s^f))$.    Therefore,  $\beta_{\delta,n}^{t,s}  = \beta_n(\delta(\mathcal{H}_t^f))=\beta_n(\delta(\mathcal{H}_s^f))$.
  \end{proof}

{\small

\bigskip

Shiquan Ren

Address:
School  of  Mathematics and Statistics,  Henan University,  Kaifeng   475004,  China.

e-mail:  renshiquan@henu.edu.cn

   \bigskip

Jie Wu  (corresponding  author)

Address: Yanqi Lake Beijing Institute of Mathematical Sciences and Applications, Beijing 101408, China.

  e-mail: wujie@bimsa.cn

}

\begin{thebibliography}{99}


 \bibitem{interaction4}
 Timothy G. Barraclough,   \emph{How do species interactions affect evolutionary dynamics across
whole communities?}   Annu. Rev. Ecol. Evol. Syst. {\bf 46}   (2015),  25-48.



\bibitem{physr}
Federico  Battiston,   Giulia  Cencetti,    Iacopo  Iacopini,    Vito  Latora,  Maxime  Lucas,   Alice  Patania,    Jean-Gabriel  Young   and    GiovanniPetri,    \emph{Networks beyond pairwise interactions: Structure and dynamics}. Phys.  Rep.
{\bf 874}(25) (2020),  1-92.

\bibitem{rev111}
Federico Battiston, Enrico Amico,  Alain Barrat,  Ginestra Bianconi,  Guilherme Ferraz de Arruda,  Benedetta Franceschiello,  Iacopo Iacopini,  Sonia K\'efi,  Vito Latora,  Yamir Moreno,  Micah M. Murray,  Tiago P. Peixoto,  Francesco Vaccarino  and   Giovanni Petri,     \emph{The physics of higher-order interactions in complex systems}.  Nat. Phys. {\bf  17}(10)  (2021),  1093-1098.






\bibitem{berge}
 Claude Berge,  \emph{Graphs and hypergraphs}. North-Holland Mathematical Library, Amsterdam, 1973.




\bibitem{bred}
G.   E. Bredon,  \emph{Sheaf theory}.   Graduate Texts in Mathematics (GTM  vol. 170),  Springer,  New  York,   1997.



 \bibitem{h1}
Stephane  Bressan, Jingyan   Li, Shiquan  Ren and Jie  Wu, \emph{The embedded homology of hypergraphs and applications}. Asian J.    Math.   {\bf 23}(3)  (2019), 479--500.













\bibitem{cohenm}
  Marshall  M.  Cohen,  \emph{A Course in simple-homotopy theory}.  Springer-Verlag New York Inc., 1973.




 \bibitem{pd2}
Fr\'ed\'eric Chazal,   David Cohen-Steiner,   Marc Glisse,   Leonidas J. Guibas  and  Steve Y. Oudot,  \emph{Proximity of persistence modules and their diagrams}. Proceedings of the 25-th annual symposium on computational geometry, ACM, 2009, 237--246.


\bibitem{pmd}
Fr\'ed\'eric Chazal, Vin de Silva, Marc Glisse  and Steve Oudot,  \emph{The structure and stability of persistence modules}.  Springer Cham,  2016.





\bibitem{pd1}
David Cohen-Steiner, Herbert Edelsbrunner and  John Harer,   \emph{Stability of persistence diagrams}. Discrete Comput. Geom.  {\bf 37} (2007), 103--120.


\bibitem{add-1}
  David Cohen-Steiner,   Herbert Edelsbrunner,     John Harer  and  Dmitriy Morozov,    \emph{Persistent homology for kernels, images and cokernels}.  Proceedings of the Twentieth Annual ACM-SIAM Symposium on Discrete Algorithms (2009),  1011--1020.




\bibitem{decomp}
William Crawley-Boevey,  \emph{Decomposition of point-wise finite dimensional persistence modules}.  J. Algebra Appl. {\bf 14}(5) (2015), 1550066.



\bibitem{hiraoka}
Trinh   Khanh   Duy,  Yasuaki Hiraoka  and  Tomoyuki  Shirai,  \emph{Limit theorems  for  persistence  diagrams}.
Ann.  Appl.  Probab.  {\bf  28}(5)   (2018),   2740-2780.



\bibitem{2010ams}
Herbert Edelsbrunner  and      John Harer,  \emph{Computational topology:  an introduction}.  The  American Mathematical Society, 2010.

\bibitem{2002}
Herbert Edelsbrunner,  David Letscher   and Afra Zomorodian, \emph{Topological persistence and simplification}. Discrete Comput. Geom. {\bf 28} (2002), 511--533.



\bibitem{forman1}
Robin  Forman, \emph{Morse theory for cell complexes}. Adv. Math. {\bf 134} (1)  (1998), 90-145.









\bibitem{superh}
Jelena Grbi\'c,   Jie Wu,   Kelin Xia   and  Guo-Wei Wei,    \emph{Aspects  of   topological  approaches  for data  science}.   Foundations of Data Science,  American Institute of  Mathematical Sciences  {\bf 4}(2)  (2022), 165-216.












\bibitem{lin1}
Alexander Grigor'yan,  Yong  Lin  and  Shing-Tung Yau,   \emph{Torsion of digraphs and path complexes}.   arXiv: 2012.07302v1,  2020.

\bibitem{lin2}
Alexander Grigor'yan,  Yong  Lin, Yuri Muranov  and  Shing-Tung Yau,  \emph{Homologies of path complexes and digraphs}.   arXiv: 1207.2834,  2013.

\bibitem{lin3}
Alexander Grigor'yan,  Yong  Lin, Yuri Muranov  and  Shing-Tung Yau,  \emph{Homotopy  theory for  digraphs}.    Pure   Appl.  Math.  Q.   {\bf 10}(4)     (2014),  619--674.


\bibitem{lin4}
Alexander Grigor'yan,  Yong  Lin, Yuri Muranov  and  Shing-Tung Yau,  \emph{Cohomology of digraphs and (undirected) graphs}.  Asian J.  Math.  {\bf 15}(5)   (2015),   887-932.


\bibitem{lin6}
Alexander Grigor'yan,  Yong  Lin, Yuri Muranov  and  Shing-Tung Yau,  \emph{Path complexes and their homologies}.   J.   Math.   Sci.   {\bf 248}(5)    (2020),  564-599.




\bibitem{hatcher}
Allen  Hatcher, \emph{Algebraic topology}. Cambridge University Press, Cambridge, 2001.












  \bibitem{liu-xia}
Xiang Liu, Xiangjun Wang, Jie Wu  and  Kelin Xia,  \emph{
   Hypergraph-based persistent cohomology (HPC) for molecular representations in drug design}. Brief.   Bioinform.  {\bf 22}(5)    (2021), DOI: 10.1093/bib/bbaa411.




\bibitem{lx1}
Xiang Liu, Huitao Feng, Jie Wu    and  Kelin Xia,   \emph{ Persistent spectral hypergraph based machine learning (PSH-ML) for protein-ligand binding affinity prediction}.  Brief.  Bioinform. {\bf 22}(5)  (2021),  no. bbab127.


\bibitem{eat}
James  R.  Munkres,  \emph{Elements of algebraic topology}.  Addison-Wesley Publishing Company, California, 1984.
















    \bibitem{parks}
  A.~D. Parks  and S.~L. Lipscomb, \emph{Homology and hypergraph acyclicity: a combinatorial invariant for hypergraphs}.   Naval Surface Warfare Center, 1991.














\bibitem{interaction5}
John J. Stachowicz,   \emph{Mutualism, facilitation, and the structure of ecological
communities: positive interactions play a critical, but underappreciated, role in
ecological communities by reducing physical or biotic stresses in existing habitats
and by creating new habitats on which many species depend}.    Bioscience {\bf  51}
(2001),   235-246.




\bibitem{review}
Jean-Gabriel Young, Giovanni Petri  and   Tiago P. Peixoto,   \emph{Hypergraph reconstruction from network data}.  Commun. Phys. {\bf 135}  (2021),  https://doi.org/10.1038/s42005-021-00637-w.


\bibitem{wrl}
Chong  Wang,  Shiquan   Ren  and  Jian   Liu,  \emph{ A  K\"unneth formula  for finite sets}.    Chin.  Ann.  of  Math.  Ser. B  {\bf 42}(6)  (2021), 801-812.

\bibitem{cy}
Chengyuan   Wu, Shiquan  Ren,  Jie  Wu and  Kelin   Xia, \emph{Discrete Morse theory for weighted simplicial complexes}.  Topol.   Appl.  {\bf 270} (2020), Article 107038.

\bibitem{interaction3}
Shuang Wu, Libo Jiang, Xiaoqing He, Yi Jin, Christopher H. Griffin  and  Rongling Wu,   \emph{A quantitative decision theory of animal conflict}.  Heliyon {\bf 7} (2021) e07621.













\bibitem{2005}
A. Zomorodian and G. Carlsson, \emph{Computing persistent homology}. Discrete Comput. Geom. {\bf 33}(2) (2005), 249--274.






 \end{thebibliography}
 \end{document}